\newcommand{\att}{\mathcal{A}_\mathcal{S}}
\newcommand{\bbr}[1]{\bm{(}#1\bm{)}}
\newcommand{\lip}{\text{Lip}}
\theoremstyle{definition}
\newtheorem{definition}{Definition}[section]
\theoremstyle{plain}
\newtheorem{theorem}{Theorem}[section]
\theoremstyle{definition}
\newtheorem{remark}{Remark}[section]
\theoremstyle{definition}
\newtheorem{corollary}{Corollary}[section]
\theoremstyle{definition}
\theoremstyle{plain}
\newtheorem{proposition}{Proposition}[section]
\numberwithin{equation}{section}
\newtheorem{lemma}{Lemma}[section]
\begin{document}
\author{Bogdan Anghelina,
  Radu Miculescu, María Antonia Navascués
}
\date{}
\title{On the range of fractal interpolation functions}
\maketitle

\textbf{Abstract.} In this paper, based on the results from  [On the localization of Hutchinson–Barnsley fractals, Chaos Solitons Fractals, 173 (2023), 113674], we generate coverings
(consisting of finite families of rhombi) of the graph of fractal interpolation
functions. As a by-product we obtain estimations for the range of such
functions. Some concrete examples and graphical representations are
provided.
\\

\textbf{Keywords:} iterated function system, fractal interpolation function, range of a function
\\

\textbf{MSC}: 28A80, 41A05
\section{Introduction}
Fractal interpolation functions (abbreviated FIFs) are extremely useful instruments for
modeling irregular patterns which produce smooth and non-smooth
interpolants. They have been introduced by M. Barnsley, via the
concept of iterated function system (see \cite{hutchinson}), including and supplementing the
classical interpolants.
Following the seminal work of M. Barnsley (see \cite{barnsley1}), the fractal
interpolation functions theory has been expanded in several ways. For
example:
\\- affine zipper fractal interpolation functions are considered in
\cite{chand}
\\ - vector valued fractal interpolation functions are treated in
\cite{massopust1}
\\- local fractal interpolation functions are examined in \cite{massopust3}
\\- fractal interpolation functions with variable parameters are
investigated in \cite{wang}
\\- graph-directed fractal interpolation functions are studied in
\cite{deniz}
\\- fractal interpolation functions with partial self similarity are
explored in \cite{leor}.\\
See also \cite{miculescu1}, \cite{navasques1}, \cite{navasques6}, \cite{navasques5}, \cite{ri} and \cite{vasilev}.

Some fields where one can find applications of FIFs encompass:
\\- signal analysis (see \cite{banerje})
\\- satellite images (see \cite{cheng})
\\- medicine (see \cite{craciunescu} and \cite{stanley})
\\- image compression (see \cite{drako})
\\- financial analysis (see \cite{leon})
\\- ecology (see \cite{lu})
\\- signal reconstruction (see \cite{Gdaw}, \cite{navasques2} and \cite{zhai})
\\- geology (see \cite{xie})
\\- meteorology (see \cite{xiu}).

Excellent surveys on fractal interpolation functions could be found in \cite{banerje}, \cite{barnsley2}, \cite{massopust2}, \cite{navasques3} and \cite{navasques4}.

The brusque oscillations occurring in fractal interpolation
functions modeling financial data (which are of great interest for
decision makers and investors see \cite{manous}) is a strong reason to study
the range of a FIF. An additional motive, from the point of view of
cardiologists, for such a study stems from \cite{riccio}. Actually such an
investigation, but in some particular cases, has been done in \cite{chen} and
\cite{viswanathan}.

In this paper, based on the results from \cite{locHBfrct}, we generate coverings
(consisting of finite families of rhombi) of the graph of fractal interpolation
functions (see Theorem 3.1). As a by-product we obtain estimations for
the range of such functions (see Theorem 3.2). Some concrete examples
and graphical representations are provided in the last section.
\section{Preliminaries}

Given a metric space \((X,d)\), \(x\in X\) and \(r>0\) we shall use the following notation:

\begin{itemize}
  \item \(B[x,r]=\{y\in X \;|\; d(y,x)\leq r\}\)

  \item \(\mathcal{P}_{cp}(X)=\{A\subseteq X| A \text{ is non-empty and compact}\}\)
  \item \(h\) is the Hausdorff-Pompeiu metric, described by
        \[\displaystyle h(K_1,K_2)=\max\left\{\sup_{x\in K_1}d(x,K_2),\sup_{x\in K_2} d(x,K_1)\right\},\]
        for all \(K_1,K_2\in\mathcal{P}_{cp}(X)\).

\end{itemize}

For a Lipschitz function \(f:X \to X\) we shall denote by \(\lip(f)\) the Lipschitz constant of \(f\).
\\

Let us recall the following well known result:
\begin{lemma}\label{unionsup}
  Let \(A_1,\;A_2,\dots,\; A_n\), where \(n\in\mathbb{N}\), be non-empty, bounded subsets of \(\mathbb{R}\). Then

  \[\sup(A_1\cup A_2\cup \dots\cup A_n ) = \max \{\sup A_1, \sup A_2, \dots, \sup A_n\}.\]
\end{lemma}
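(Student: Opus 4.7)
The plan is to establish the equality by a straightforward double-inequality argument, which avoids any case distinction or induction, although an induction on $n$ starting from the two-set case is an equally valid route.

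First I would set $s_i := \sup A_i$ for $i=1,\ldots,n$ (each $s_i$ exists in $\mathbb{R}$ since each $A_i$ is non-empty and bounded) and let $M := \max\{s_1,\ldots,s_n\}$. For the inequality $\sup(A_1\cup\cdots\cup A_n)\le M$, I would take any $x$ in the union, observe that $x\in A_j$ for some index $j$, hence $x\le s_j\le M$; this shows $M$ is an upper bound of the union, so the supremum cannot exceed it.

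For the reverse inequality, I would use the monotonicity of the supremum under inclusion: since $A_j\subseteq A_1\cup\cdots\cup A_n$ for each $j$, we get $s_j\le \sup(A_1\cup\cdots\cup A_n)$, and taking the maximum over $j$ yields $M\le \sup(A_1\cup\cdots\cup A_n)$. Combining the two inequalities gives the stated equality.

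There is no real obstacle here: the result is standard and essentially reduces to the definition of the supremum together with the order-preserving property of sup under set inclusion. The only minor point worth verifying is that $M$ is well-defined as a real number, which follows from boundedness of each $A_i$; this is precisely why the hypothesis of boundedness (and non-emptiness) is included in the statement.
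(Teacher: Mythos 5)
Your proof is correct. The paper does not actually prove this lemma --- it is stated as a ``well known result'' and used without proof --- so there is nothing to compare against; your double-inequality argument (showing $\max_j \sup A_j$ is an upper bound for the union, and conversely that each $\sup A_j$ is dominated by the supremum of the union via monotonicity of $\sup$ under inclusion) is the standard and complete way to fill in that omission.
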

\begin{definition}[]\label{fractaldef} An iterated function system (for short IFS) is a pair \(\mathcal{S}=((X,d),(f_i)_{i\in I})\), where \((X,d)\) is a complete metric space and \(f_i:X\to X\), \(i\in I\), are Banach contractions.

  The function \(F_\mathcal{S}:\mathcal{P}_{cp}(X)\to \mathcal{P}_{cp}(X)\), given by \[F_\mathcal{S}(K)=\bigcup_{i\in I}f_i(K),\] for every \(K\in \mathcal{P}_{cp}(X)\), is called the fractal operator (or the Hutchinson operator) associated with \(\mathcal{S}\).

\end{definition}
\begin{proposition}\label{2:picard} If \(\mathcal{S}=((X,d),(f_i)_{i\in I})\) is an iterated function system, then \(F_\mathcal{S}\) is a contraction with respect to \(h\). Its unique fixed point is denoted by \(\att\) and it is called the attractor of \(\mathcal{S}\) since
  \[\lim_{n\to\infty} \underbrace{(F_\mathcal{S}\circ F_\mathcal{S}\circ\dots\circ F_\mathcal{S})}_{n \text{ times }}(K) = \att,\]
  for each \(K\in\mathcal{P}_{cp}(X).\)
\end{proposition}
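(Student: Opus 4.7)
The plan is to apply Banach's fixed point theorem on the hyperspace $(\mathcal{P}_{cp}(X), h)$, which is known to be complete whenever $(X,d)$ is complete. I would first handle the housekeeping: verify that $F_\mathcal{S}$ actually maps $\mathcal{P}_{cp}(X)$ into itself. Since each $f_i$ is a Banach contraction it is continuous, so each $f_i(K)$ is non-empty and compact, and (taking the index set $I$ to be finite, as is implicit in Definition \ref{fractaldef}) the finite union $\bigcup_{i\in I} f_i(K)$ is again non-empty and compact.

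The heart of the argument is the contraction estimate. Set $c := \max_{i\in I} \lip(f_i)$, which is strictly less than $1$ since each $f_i$ is a Banach contraction and $I$ is finite. For $K_1, K_2 \in \mathcal{P}_{cp}(X)$ I would combine two general facts: (i) if $g$ is Lipschitz then $h(g(A), g(B)) \leq \lip(g)\, h(A,B)$ for all non-empty compact $A,B$; and (ii) for finite families of non-empty compact sets,
\[
h\Bigl(\bigcup_{i\in I} A_i,\, \bigcup_{i\in I} B_i\Bigr) \leq \max_{i\in I} h(A_i, B_i).
\]
Chaining these inequalities yields
\[
h(F_\mathcal{S}(K_1), F_\mathcal{S}(K_2)) \leq \max_{i\in I} h(f_i(K_1), f_i(K_2)) \leq c\, h(K_1, K_2),
\]
so $F_\mathcal{S}$ is a contraction with ratio at most $c<1$.

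With the contraction property established, the assertions about the fixed point follow at once from the classical Banach contraction principle applied to the complete metric space $(\mathcal{P}_{cp}(X), h)$: there is a unique $\att \in \mathcal{P}_{cp}(X)$ with $F_\mathcal{S}(\att) = \att$, and for every starting compact set $K$ the iterates converge to $\att$ in $h$, i.e.\ $\ulim F_\mathcal{S}^n(K) = \att$.

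The step I expect to require the most care is the proof of fact (ii), since it is the one place where the precise structure of the Hausdorff--Pompeiu metric really enters. Unfolding its definition as the maximum of two suprema and applying Lemma \ref{unionsup} to each of them gives the bound cleanly; everything else is either a citation (completeness of $(\mathcal{P}_{cp}(X),h)$, Banach's theorem) or a routine use of the Lipschitz property of the maps $f_i$.
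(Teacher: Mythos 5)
Your argument is correct and is precisely the classical Hutchinson--Barnsley proof: the paper states Proposition \ref{2:picard} without proof, as a recalled result going back to \cite{hutchinson}, and your route (completeness of \((\mathcal{P}_{cp}(X),h)\), the Lipschitz estimate for images, the union estimate via Lemma \ref{unionsup}, then Banach's fixed point theorem) is the standard one. Your explicit remark that \(I\) must be taken finite is a sensible reading of Definition \ref{fractaldef}, since otherwise neither the compactness of the union nor \(\max_{i\in I}\lip(f_i)<1\) would be guaranteed.
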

\begin{theorem}[see Theorem 3.1 from \cite{locHBfrct}]
  For an iterated function system \(\mathcal{S}=((X,d),(f_i)_{i\in \{1,\dots n\}})\), we have
  \[\att\subseteq \left(\bigcup_{i\in\{1,\dots,\;n-1\}}B\left[\gamma_{i},Ms_{i}\frac{1+s_{n}}{1-s_{{n-1}}s_{n}}\right]\right)\bigcup B\left[\gamma_{n},Ms_{n}\frac{1+s_{{n-1}}}{1-s_{{n-1}}s_{n}}\right],\]

  where \begin{enumerate}
    \item \(s_{i} = \lip(f_{i})\) for each \(i\in\{1,\dots ,n\}\)
          \item\(s_{1}\leq s_{2}\leq \dots \leq s_{n}\)
    \item \(\gamma_{i}\) is the unique fixed point of \(f_{i}\) for each \(i\in\{1,\dots ,n\}\)
    \item \(M=\max_{i,j\in\{1,\dots,n\}}d(\gamma_i,\gamma_j)\).
  \end{enumerate}

\end{theorem}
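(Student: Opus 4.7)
The plan is to identify the right-hand side of the claimed inclusion with a closed, forward-invariant set $K_{0}$ for the Hutchinson operator $F_\mathcal{S}$, and then use Proposition \ref{2:picard} to force $\att\subseteq K_{0}$. Concretely, set $r_i = Ms_i\frac{1+s_n}{1-s_{n-1}s_n}$ for $i\in\{1,\dots,n-1\}$ and $r_n = Ms_n\frac{1+s_{n-1}}{1-s_{n-1}s_n}$; both denominators are positive because $s_n<1$ forces $s_{n-1}s_n<1$. Let $K_{0}=\bigcup_{i=1}^{n}B[\gamma_{i},r_{i}]$, which is non-empty and closed.

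The heart of the argument is the verification $F_\mathcal{S}(K_{0})\subseteq K_{0}$: for every $i,j\in\{1,\dots,n\}$ and every $x\in B[\gamma_{i},r_{i}]$ I must place $f_{j}(x)$ in $K_{0}$. If $j=i$, then $d(f_{i}(x),\gamma_{i})\leq s_{i}r_{i}\leq r_{i}$, so $f_{i}(x)\in B[\gamma_{i},r_{i}]$. If $j\neq i$, the triangle inequality and $f_{j}(\gamma_{j})=\gamma_{j}$ yield
\[
d(f_{j}(x),\gamma_{j})\leq s_{j}\bigl(d(x,\gamma_{i})+d(\gamma_{i},\gamma_{j})\bigr)\leq s_{j}(r_{i}+M),
\]
so it suffices to prove $s_{j}(r_{i}+M)\leq r_{j}$ in the remaining pairs.

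The combinatorially delicate step, which I expect to be the main obstacle, is exactly this system of inequalities, because the radius formula differs between indices $<n$ and $=n$. I would dispatch it via two short identities: for $i\leq n-1$, monotonicity of the $s_{i}$ gives $r_{i}+M\leq r_{n-1}+M = M\frac{1+s_{n-1}}{1-s_{n-1}s_{n}}$, while for $i=n$ a direct computation gives $r_{n}+M = M\frac{1+s_{n}}{1-s_{n-1}s_{n}}$. Multiplying by $s_{j}$ and comparing with $r_{j}$ then covers the four regimes: for $i,j\leq n-1$ one uses $s_{n-1}\leq s_{n}$ to bound $\frac{1+s_{n-1}}{1-s_{n-1}s_{n}}\leq\frac{1+s_{n}}{1-s_{n-1}s_{n}}$; for $i\leq n-1$, $j=n$ one gets $s_{n}(r_{i}+M)\leq Ms_{n}\frac{1+s_{n-1}}{1-s_{n-1}s_{n}}=r_{n}$; for $i=n$, $j\leq n-1$ one gets the equality $s_{j}(r_{n}+M)=Ms_{j}\frac{1+s_{n}}{1-s_{n-1}s_{n}}=r_{j}$; the case $i=j=n$ is already covered above.

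Finally, with $F_\mathcal{S}(K_{0})\subseteq K_{0}$ established, I would iterate on the compact starting set $\{\gamma_{1}\}$: the orbit $F_\mathcal{S}^{m}(\{\gamma_{1}\})$ stays inside $K_{0}$ for every $m$ and converges to $\att$ in the Hausdorff--Pompeiu metric by Proposition \ref{2:picard}. Since $K_{0}$ is closed, the limit $\att$ must satisfy $\att\subseteq K_{0}$, which is precisely the required inclusion.
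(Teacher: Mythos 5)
The paper does not actually prove this statement --- it is quoted verbatim from Theorem 3.1 of \cite{locHBfrct} --- so there is no in-text proof to compare against. Your argument is correct and is the standard forward-invariance proof one would expect that reference to use: the system of inequalities $s_j(r_i+M)\leq r_j$ checks out in all four regimes (and you rightly dispatch $i=j=n$ by the direct contraction bound $d(f_n(x),\gamma_n)\leq s_n r_n\leq r_n$, since the generic estimate $s_n(r_n+M)=Ms_n\frac{1+s_n}{1-s_{n-1}s_n}\leq r_n$ would require $s_n\leq s_{n-1}$ and can fail), after which iterating $F_{\mathcal{S}}$ from the compact seed $\{\gamma_1\}$ inside the closed set $K_0$ and invoking Proposition \ref{2:picard} legitimately yields $\att\subseteq K_0$.
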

\section{The main results}

\textbf{The framework of our main result}
\\

Inspired by \cite{barnsley1} and \cite{barnsley2}, let \(n\in\mathbb{N}\) and \((x_0,y_0),(x_1,y_1),\dots, (x_n,y_n)\in\mathbb{R}^2\) with \[a:=x_0<x_1<\dots<x_n=:b\]
and let us consider the iterated function system \(\mathcal{S}=((\mathbb{R}^2,\rho),(f_k)_{k\in \{1,\dots, n\}})\), where:
\begin{itemize}
  \item
        \(f_k:\mathbb{R}^2\to\mathbb{R}\) is given by \[f_k(x,y)=(a_k x+ b_k , c_k x +d_k y+e_k),\] for all \((x,y)\in\mathbb{R}^2\) and \(k\in\{1,\dots,n\}\), where
        \[\begin{aligned}
            a_k & =\frac{x_k-x_{k-1}}{x_n-x_0}                                             \\
            b_k & = \frac{x_nx_{k-1}-x_0x_k}{x_n-x_0}                                      \\
            d_k & \in[0,1)                                                                 \\
            c_k & = \frac{y_k-y_{k-1}}{x_n-x_0}-d_k\frac{y_n-y_0}{x_n-x_0}                 \\
            e_k & =\frac{x_ny_{k-1}-x_0y_k}{x_n-x_0} -d_k \frac{x_ny_{0}-x_0y_n}{x_n-x_0};
          \end{aligned}\]

  \item the complete metric \(\rho:\mathbb{R}^2\times\mathbb{R}^2\to[0,\infty)\) is given by
        \[\rho((u_1,v_1),(u_2,v_2))=|u_1-u_2|+\theta |v_1-v_2|,\] for all \((u_1,v_1),(u_2,v_2) \in \mathbb{R}^2\), where
        \[\theta = \left\{\begin{aligned}
             & 1,                                                                   &  & c_1=c_2=\dots=c_n=0; \\
             & \frac{1-\max_{k\in\{1\dots n\}}a_k}{2\max_{k\in\{1,\dots,n\}}|c_k|}, &  & \text{otherwise}.
          \end{aligned}\right.\]
\end{itemize}
Then, there exists a continuous function \(f:[a,b]\to\mathbb{R}\), which is called an affine fractal interpolation function, such that :

\begin{center}\begin{itemize}
    \item \[f(x_k)=y_k,\] for all \(k\in\{0,1,\dots, n\}\)
    \item \[G_f=\att.\]
  \end{itemize}\end{center}

\textbf{Finding the fixed point of \(f_k\)}
\\

For a fixed, but arbitrarily chosen \(k\in\{1,\dots,n\}\), from \(f_k(\gamma_k)=\gamma_k=(u_k,v_k)\) we obtain
\[(a_ku_k+b_k,c_ku_k+d_kv_k+e_k)=(u_k,v_k),\]
hence
\[\begin{cases}
    a_ku_k+b_k        & =u_k  \\
    c_ku_k+d_kv_k+e_k & =v_k.
  \end{cases}\]

From the first equation, we deduce that \[u_k = \frac{b_k}{1-a_k}.\]

Substituting in the second equation, we obtain
\[c_k\frac{b_k}{1-a_k}+d_kv_k+e_k=v_k,\]
so
\[v_k =\frac{b_kc_k}{(1-a_k)(1-d_k)}+\frac{e_k}{1-d_k}.\]

Hence, the (unique) fixed point of \(f_k\) is \[\gamma_k = \left(\frac{b_k}{1-a_k},\frac{b_kc_k}{(1-a_k)(1-d_k)}+\frac{e_k}{1-d_k}\right),\]
for all \(k\in\{1,\dots ,n\}\).
\\\\
\textbf{Finding the Lipschitz constant of \(f_k\)}
\\

\begin{proposition}
  In the above mentioned framework, we have
  \[\lip(f_k)=\max \{d_k,a_k+\theta |c_k|\},\] for every \(k\in\{1,\dots,n\}\)
\end{proposition}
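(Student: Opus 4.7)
The plan is to compute $\rho(f_k(x_1,y_1),f_k(x_2,y_2))$ explicitly from the definitions of $\rho$ and $f_k$, obtain the upper bound $\max\{d_k, a_k+\theta|c_k|\}$, and then produce two specific pairs of points that attain each of the two quantities being maximized, so that equality is forced.

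First I would substitute into the metric and use that $a_k>0$ (since $x_k>x_{k-1}$ and $x_n>x_0$) to write
\[\rho\bigl(f_k(x_1,y_1),f_k(x_2,y_2)\bigr)=a_k|x_1-x_2|+\theta\bigl|c_k(x_1-x_2)+d_k(y_1-y_2)\bigr|.\]
Applying the triangle inequality to the second absolute value and grouping terms gives
\[\rho\bigl(f_k(x_1,y_1),f_k(x_2,y_2)\bigr)\le (a_k+\theta|c_k|)\,|x_1-x_2|+d_k\cdot\theta|y_1-y_2|,\]
which is bounded above by $\max\{d_k,a_k+\theta|c_k|\}\,\rho((x_1,y_1),(x_2,y_2))$. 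This yields $\lip(f_k)\le \max\{d_k,a_k+\theta|c_k|\}$.

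For the reverse inequality I would exhibit two test cases. Taking $x_1=x_2$ and $y_1\ne y_2$ makes the first-coordinate difference vanish and the inner triangle inequality an equality, producing
\[\rho\bigl(f_k(x_1,y_1),f_k(x_2,y_2)\bigr)=\theta d_k|y_1-y_2|=d_k\,\rho\bigl((x_1,y_1),(x_2,y_2)\bigr),\]
so $\lip(f_k)\ge d_k$. Taking $y_1=y_2$ and $x_1\ne x_2$ eliminates the $d_k(y_1-y_2)$ term, yielding
\[\rho\bigl(f_k(x_1,y_1),f_k(x_2,y_2)\bigr)=(a_k+\theta|c_k|)\,|x_1-x_2|=(a_k+\theta|c_k|)\,\rho\bigl((x_1,y_1),(x_2,y_2)\bigr),\]
so $\lip(f_k)\ge a_k+\theta|c_k|$. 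Combining both gives the desired equality.

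There is no real obstacle here; the calculation is elementary. The only point requiring a moment of attention is verifying that $a_k\ge 0$ (so that $|a_k|=a_k$) and that the test pairs saturate the triangle inequality — precisely because in each test case one of the two summands inside $|c_k(x_1-x_2)+d_k(y_1-y_2)|$ is zero, no cancellation can occur, so the upper bound is attained exactly.
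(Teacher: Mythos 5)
Your proof is correct, and it takes a genuinely different and considerably shorter route than the paper's. The paper computes $\lip(f_k)$ as the exact supremum of the set of difference quotients: it reduces the problem to the one-parameter family $t\mapsto\frac{a_k+\theta|c_k+d_kt|}{1+\theta|t|}$ and then carries out an extensive case analysis (on $d_k=0$ versus $d_k>0$, on the sign of $c_k$, and on the relative sizes of $d_k$ and $a_k\pm\theta c_k$), determining the supremum of each monotone piece separately. You instead use the standard two-sided argument: the triangle inequality gives
\[
\rho\bigl(f_k(x_1,y_1),f_k(x_2,y_2)\bigr)\le (a_k+\theta|c_k|)|x_1-x_2|+d_k\cdot\theta|y_1-y_2|\le\max\{d_k,a_k+\theta|c_k|\}\,\rho\bigl((x_1,y_1),(x_2,y_2)\bigr),
\]
and purely horizontal and purely vertical displacements attain the two candidate values exactly, forcing equality. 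Your argument correctly uses $a_k>0$ (from $x_{k-1}<x_k$ and $x_0<x_n$) and $d_k\ge 0$ (from $d_k\in[0,1)$) to drop absolute values where needed, and it covers all sign cases of $c_k$ and the case $d_k=0$ uniformly, which the paper handles separately. What the paper's longer computation buys is a complete description of the set of difference quotients (and hence where the supremum is attained or approached); your approach buys brevity and robustness, since only the extremal directions matter for the Lipschitz constant itself. Both establish the proposition in full.
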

\begin{proof}
  Let us consider \(k\in\{1,\dots, n\}\) arbitrarily chosen, but fixed.

  Note that \[\lip(f_k)= \sup \mathcal{M},\] where \[\begin{aligned}\mathcal{M}= & \left\{ \frac{\rho\bbr{f_k(u_1,v_1),f_k(u_2,v_2)}}{\rho\bbr{(u_1,v_1),(u_2,v_2)}} \big| {(u_1,v_1),(u_2,v_2)\in\mathbb{R}^2, (u_1,v_1)\neq (u_2,v_2)}\right\}        \\
               =            & \left\{\frac{a_k|u_1-u_2|+\theta|c_k(u_1-u_2)+d_k(v_1-v_2)|}{|u_1-u_2|+\theta |v_1-v_2|}\big| {(u_1,v_1),(u_2,v_2)\in\mathbb{R}^2, (u_1,v_1)\neq (u_2,v_2)}\right\}.
    \end{aligned}\]

  If \(d_k=0\), we get \[\begin{aligned}
      \mathcal{M}= & \left\{\frac{a_k|u_1-u_2|+\theta|c_k(u_1-u_2)|}{|u_1-u_2|+\theta |v_1-v_2|}\big| {(u_1,v_1),(u_2,v_2)\in\mathbb{R}^2,
      v_1\neq v_2, u_1=u_2}\right\}                                                                                                                                        \\
                   & \qquad\bigcup\left\{\frac{a_k|u_1-u_2|+\theta|c_k(u_1-u_2)|}{|u_1-u_2|+\theta |v_1-v_2|}\big| (u_1,v_1),(u_2,v_2)\in\mathbb{R}^2,u_1\neq u_2\right\}  \\
      =            & \{0\}\cup\left\{\frac{(a_k+\theta|c_k|)|u_1-u_2|}{|u_1-u_2|+\theta \left|v_1-v_2\right|}\big| (u_1,v_1),(u_2,v_2)\in\mathbb{R}^2, u_1\neq u_2\right\} \\
      =            & \{0\}\cup\left\{\frac{a_k+\theta|c_k|}{1+\theta \left|\frac{v_1-v_2}{u_1-u_2}\right|}\big| (u_1,v_1),(u_2,v_2)\in\mathbb{R}^2, u_1\neq u_2\right\}    \\
      =            & \{0\}\cup\left\{\frac{a_k+\theta|c_k|}{1+\theta \left|t\right|}\big|t\in\mathbb{R}\right\}.
    \end{aligned}\]

  Since \[\sup\left\{\frac{a_k+\theta|c_k|}{1+\theta \left|t\right|}\big|t\in\mathbb{R}\right\}= a_k+\theta |c_k|,\]\
  taking into account Lemma \ref{unionsup}, we get \[\lip(f_k)=\max\{0,a_k+\theta|c_k|\}=\max\{d_k,a_k+\theta c_k\}.\]

  So, we can suppose that \(d_k>0\).

  We have

  \[\begin{aligned}
      \mathcal{M}= & \left\{\frac{a_k|u_1-u_2|+\theta|c_k(u_1-u_2)+d_k(v_1-v_2)|}{|u_1-u_2|+\theta |v_1-v_2|}\big| {(u_1,v_1),(u_2,v_2)\in\mathbb{R}^2,
      v_1\neq v_2, u_1=u_2}\right\}                                                                                                                                                                  \\
                   & \qquad\bigcup\left\{\frac{a_k|u_1-u_2|+\theta|c_k(u_1-u_2)+d_k(v_1-v_2)|}{|u_1-u_2|+\theta |v_1-v_2|}\big| (u_1,v_1),(u_2,v_2)\in\mathbb{R}^2,u_1\neq u_2\right\}               \\
      =            & \{d_k\}\cup\left\{\frac{a_k|u_1-u_2|+\theta|c_k(u_1-u_2)+d_k(v_1-v_2)|}{|u_1-u_2|+\theta \left|v_1-v_2\right|}\big| (u_1,v_1),(u_2,v_2)\in\mathbb{R}^2, u_1\neq u_2\right\}     \\
      =            & \{d_k\}\cup\left\{\frac{a_k+\theta|c_k+d_k\frac{v_1-v_2}{u_1-u_2}|}{1+\theta \left|\frac{v_1-v_2}{u_1-u_2}\right|}\big| (u_1,v_1),(u_2,v_2)\in\mathbb{R}^2, u_1\neq u_2\right\} \\
      =            & \{d_k\}\cup\left\{\frac{a_k+\theta|c_k+d_kt|}{1+\theta \left|t\right|}\big|t\in\mathbb{R}\right\}.
    \end{aligned}\]

  We divide our proof in three cases, namely:
  \begin{enumerate}
    \item[] Case 1. \(c_k =0 \)
    \item[] Case 2. \(c_k < 0\)
    \item[] Case 3. \(c_k > 0\)
  \end{enumerate}

  In the first case, we have
  \[\mathcal{M}=\{d_k\}\cup\left\{\frac{a_k+\theta d_kx}{1+\theta x}\big|x \geq 0\right\}.\]

  Since
  \[\sup\left\{\frac{a_k+\theta d_kx}{1+\theta x}\big|x \geq 0\right\}=\begin{cases}
      d_k,                 & \qquad d_k\geq a_k \\
      a_k=a_k+\theta|c_k|, & \qquad d_k<a_k
    \end{cases}, \]
  via Lemma \ref{unionsup}, we get
  \[\lip(f_k)=\sup \mathcal{M} = \max \{d_k,a_k+\theta |c_k|\}.\]

  In the second case, we have
  \[\mathcal{M}=\{d_k\} \cup \mathcal{M}_1 \cup \mathcal{M}_2 \cup \mathcal{M}_3,\]
  where
  \begin{align*}
    \mathcal{M}_1 & = \left\{\frac{a_k+\theta c_k+\theta d_kt}{1+\theta t}\big|t\geq -\frac{c_k}{d_k}\right\}, \\
    \mathcal{M}_2 & = \left\{\frac{a_k-\theta c_k-\theta d_kt}{1+\theta t}\big|0<t< -\frac{c_k}{d_k}\right\}   \\
    \intertext{and}
    \mathcal{M}_3 & = \left\{\frac{a_k-\theta c_k-\theta d_kt}{1-\theta t}\big|t\leq0\right\}.
  \end{align*}

  Studying the monotonicity of the expressions that define the sets \(\mathcal{M}_1\), \(\mathcal{M}_2\) and \(\mathcal{M}_3\), we infer that
  \begin{align*}
    \sup \mathcal{M}_1 & =\begin{cases}
                            d_k,                           & \qquad a_k+\theta c_k\leq d_k \\
                            \frac{a_kd_k}{d_k-\theta c_k}, & \qquad a_k+\theta c_k>d_k
                          \end{cases},    \\
    \sup \mathcal{M}_2 & = a_k-\theta c_k=a_k+\theta|c_k|                                   \\\intertext{and}
    \sup \mathcal{M}_3 & =\begin{cases}
                            a_k-\theta c_k=a_k+\theta |c_k| , & \qquad a_k-\theta c_k> d_k    \\
                            d_k ,                             & \qquad a_k-\theta c_k\leq d_k
                          \end{cases}.\end{align*}

  Based on Lemma \ref{unionsup}, we get
  \[\lip(f_k) = \sup \mathcal{M} = \max \{d_k,\sup \mathcal{M}_1, \sup \mathcal{M}_2, \sup \mathcal{M}_3\},\] hence
  \begin{equation}\label{supge0}
    \lip(f_k)=\max\{d_k,a_k+\theta|c_k|,\sup \mathcal{M}_1,\sup \mathcal{M}_3\}.
  \end{equation}

  Further, we divide our discussion into three subcases, specifically:
  \begin{enumerate}
    \item[\((\alpha)\)] \(d_k< a_k+\theta c_k\)
    \item[\((\beta)\)] \(a_k+\theta c_k\leq d_k < a_k-\theta c_k\)
    \item[\((\gamma)\)] \(a_k-\theta c_k\leq d_k\).
  \end{enumerate}

  Note that in case \((\alpha)\)  we have \(d_k<  a_k+\theta c_k< a_k-\theta c_k\) and in case \((\gamma)\) we have \(a_k+\theta c_k<a_k-\theta c_k\leq d_k\).

  In case \((\alpha)\), \eqref{supge0} yields
  \[\lip(f_k) = \max \left\{d_k, a_k+\theta |c_k|, \frac{a_kd_k}{d_k-\theta c_k}\right\}=\max\{d_k, a_k+\theta |c_k|\},\]
  since \[ \frac{a_kd_k}{d_k-\theta c_k}\leq a_k+\theta |c_k|.\]

  In cases \((\beta)\) and \((\gamma)\), via \eqref{supge0}, we get
  \[\lip(f_k) = \max \left\{d_k, a_k+\theta |c_k|\right\}.\]

  In the third case, we have

  \[\mathcal{M}=\{d_k\} \cup \mathcal{N}_1 \cup \mathcal{N}_2 \cup \mathcal{N}_3,\]
  where
  \begin{align*}
    \mathcal{N}_1 & = \left\{\frac{a_k+\theta c_k+\theta d_kt}{1+\theta t}\big|t\geq 0\right\},               \\
    \mathcal{N}_2 & = \left\{\frac{a_k+\theta c_k+\theta d_kt}{1-\theta t}\big| -\frac{c_k}{d_k}<t<0\right\}  \\\intertext{and}
    \mathcal{N}_3 & = \left\{\frac{a_k-\theta c_k-\theta d_kt}{1-\theta t}\big|t\leq-\frac{c_k}{d_k}\right\}.
  \end{align*}

  Since
  \begin{align*}
    \sup \mathcal{N}_1 & =\begin{cases}
                            d_k,                            & \qquad a_k+\theta c_k\leq d_k \\
                            a_k+\theta c_k=a_k+\theta|c_k|, & \qquad a_k+\theta c_k>d_k
                          \end{cases}, \\
    \sup \mathcal{N}_2 & = a_k+\theta c_k=a_k+\theta|c_k|                                 \\
    \intertext{and}
    \sup \mathcal{N}_3 & =\begin{cases}
                            \frac{a_kd_k}{d_k+\theta c_k} , & \qquad a_k-\theta c_k> d_k    \\
                            d_k ,                           & \qquad a_k-\theta c_k\leq d_k
                          \end{cases},
  \end{align*}
  based on Lemma \ref{unionsup}, we obtain
  \[\lip(f_k) = \sup \mathcal{M} = \max \{d_k,\sup \mathcal{N}_1, \sup \mathcal{N}_2, \sup \mathcal{N}_3\},\] hence
  \begin{equation}\label{suple0}
    \lip(f_k)=\max\{d_k,a_k+\theta|c_k|,\sup \mathcal{N}_1,\sup \mathcal{N}_3\}.
  \end{equation}

  Further, we divide our discussion into three subcases, specifically:
  \begin{enumerate}
    \item[\((\alpha)\)] \(d_k< a_k-\theta c_k\)
    \item[\((\beta)\)] \(a_k-\theta c_k\leq d_k < a_k+\theta c_k\)
    \item[\((\gamma)\)] \(a_k+\theta c_k\leq d_k\).
  \end{enumerate}

  Note that in case \((\alpha)\) we have \(d_k<  a_k-\theta c_k< a_k+\theta c_k\) and in case \((\gamma)\) we have \(a_k-\theta c_k<a_k+\theta c_k\leq d_k\).

  In case \((\alpha)\), \eqref{suple0} ensures
  \[\lip(f_k) = \max \left\{d_k, a_k+\theta |c_k|, \frac{a_kd_k}{d_k+\theta c_k}\right\}=\max\{d_k, a_k+\theta |c_k|\},\]
  since \[ \frac{a_kd_k}{d_k+\theta c_k}\leq a_k+\theta |c_k|.\]

  In cases \((\beta)\) and \((\gamma)\), via \eqref{suple0}, we get
  \[\lip(f_k) = \max \left\{d_k, a_k+\theta |c_k|\right\}.\]

  All in all, we obtain
  \[\lip(f_k) = \max \left\{d_k, a_k+\theta |c_k|\right\}.\]
\end{proof}

\begin{remark}
  In view of the choice of \(d_k\) and \(\theta\), we have \[\lip(f_k)<1,\] for all \(k\in\{1,\dots, n\}\).
\end{remark}

\textbf{Describing the balls with respect to the metric \(\rho\)}
\\

For \(r\geq 0\) and \((x,y)\in\mathbb{R}^2\), the closed ball with center \((x,y)\) and radius \(r\), with respect to \(\rho\), is the set
\[\begin{aligned}\{ & (u,v)\in\mathbb{R}^2|\rho\bbr{(x,y),(u,v)}\leq r\}
                  & =\{(u,v)\in\mathbb{R}^2||x-u|+\theta|y-v|\leq r\}=:R[(x,y),r],
  \end{aligned}\]
whose border is the rhombus having the vertixes \(V_1\left(x+r,y\right)\), \(V_2\left(x-r,y\right)\), \(V_3\left(x,y+\frac{r}{\theta}\right)\) and  \(V_4\left(x,y-\frac{r}{\theta}\right).\)

\begin{remark}\label{span}
  Note that
  \[\max\{v\in\mathbb{R}| \text{ there exists } u\in \mathbb{R} \text{ such that } (u,v)\in R[(x,y),r]\}=y+\frac{r}{\theta}\]
  and
  \[\min\{v\in\mathbb{R}| \text{ there exists } u\in \mathbb{R} \text{ such that } (u,v)\in R[(x,y),r]\}=y-\frac{r}{\theta}.\]
\end{remark}

\textbf{Obtaining a cover for \(G_f\)}
\\

Summing up, in the previous framework, via Theorem 2.1, by relabeling the functions \(f_k\) such that \(s_1\leq s_2\leq\dots \leq s_n\), we get the following result:
\begin{theorem}\label{incl}
  \[G_f\subseteq \left(\bigcup_{k\in\{1,\dots,\;n-1\}}R\left[\gamma_{k},Ms_{k}\frac{1+s_{n}}{1-s_{{n-1}}s_{n}}\right]\right)\bigcup R\left[\gamma_{n},Ms_{n}\frac{1+s_{{n-1}}}{1-s_{{n-1}}s_{n}}\right],\] where
  \[\gamma_k = \left(\frac{b_k}{1-a_k},\frac{b_kc_k}{(1-a_k)(1-d_k)}+\frac{e_k}{1-d_k}\right),\]
  \[s_k = \max\{a_k+\theta|c_k|,d_k\},\]for all \(k\in\{1,\dots ,n\}\),
  \[\theta = \left\{\begin{aligned}
    & 1,                                                                   &  & c_1=c_2=\dots=c_n=0; \\
    & \frac{1-\max_{k\in\{1\dots n\}}a_k}{2\max_{k\in\{1,\dots,n\}}|c_k|}, &  & \text{otherwise}
 \end{aligned}\right.\]
  and
  \[M= \max_{i,j\in\{1,\dots,n\}}\left\{\left|\frac{b_i}{1-a_i}-\frac{b_j}{1-a_j}\right|+\theta \left|\frac{c_ib_i}{(1-a_i)(1-d_i)}+\frac{e_i}{1-d_i}-\frac{c_jb_j}{(1-a_j)(1-d_j)}-\frac{e_j}{1-d_j} \right|\right\}.\]
\end{theorem}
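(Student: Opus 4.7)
The plan is to recognize the statement as a direct application of Theorem 2.1 (from \cite{locHBfrct}) to the iterated function system $\mathcal{S}=((\mathbb{R}^2,\rho),(f_k)_{k\in\{1,\dots,n\}})$ introduced in the framework, followed by the translation of the abstract $\rho$-balls into the concrete rhombi $R[\,\cdot\,,\,\cdot\,]$.

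First, I would verify that the hypotheses of Theorem 2.1 are indeed met. The space $(\mathbb{R}^2,\rho)$ is complete since $\rho$ is strongly equivalent to the Euclidean metric. By the Proposition proved above, $\lip(f_k)=\max\{d_k,a_k+\theta|c_k|\}=s_k$, and by the Remark that immediately follows, $s_k<1$ for every $k\in\{1,\dots,n\}$. Hence each $f_k$ is a Banach contraction and $\mathcal{S}$ is a genuine IFS, so Proposition \ref{2:picard} applies and produces the attractor $\att$; by the very construction of the affine fractal interpolation function, $G_f=\att$.

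Second, I would match all the constants appearing in Theorem 2.1 with the ones displayed in the statement. The fixed points $\gamma_k$ are those computed in the subsection \textbf{Finding the fixed point of $f_k$}, while
\[ M=\max_{i,j\in\{1,\dots,n\}}\rho(\gamma_i,\gamma_j) \]
becomes, after unfolding the definition of $\rho$, exactly the expression listed in the theorem. A preliminary relabeling of the $f_k$ that achieves $s_1\leq s_2\leq\dots\leq s_n$ completes the checking of the assumptions.

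Third, I would invoke Theorem 2.1 to obtain
\[ G_f=\att\subseteq\left(\bigcup_{k\in\{1,\dots,n-1\}}B\!\left[\gamma_k,Ms_k\tfrac{1+s_n}{1-s_{n-1}s_n}\right]\right)\bigcup B\!\left[\gamma_n,Ms_n\tfrac{1+s_{n-1}}{1-s_{n-1}s_n}\right], \]
where the balls are taken with respect to $\rho$. Finally, by the subsection \textbf{Describing the balls with respect to the metric $\rho$}, each closed $\rho$-ball $B[(x,y),r]$ coincides with the rhombus $R[(x,y),r]$, and substituting this identification yields the announced cover. Since the statement is essentially a packaging of results already established in the paper, no genuine obstacle is expected; the only point that requires a small amount of care is the bookkeeping behind the formula for $M$ and the verification that the relabeling does not affect the validity of the construction of $f$.
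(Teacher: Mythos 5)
Your proposal is correct and follows essentially the same route as the paper: the paper itself presents Theorem \ref{incl} as a direct consequence of Theorem 2.1 applied to the IFS \(\mathcal{S}\), using the fixed points and Lipschitz constants computed in the preceding subsections, the relabeling so that \(s_1\leq\dots\leq s_n\), and the identification of closed \(\rho\)-balls with the rhombi \(R[\,\cdot\,,\,\cdot\,]\). Your write-up is, if anything, slightly more explicit than the paper about checking the hypotheses (completeness of \((\mathbb{R}^2,\rho)\) and \(s_k<1\)), but the argument is the same.
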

The set \[\mathcal{C_\mathcal{S}}:=\left(\bigcup_{k\in\{1,\dots,\;n-1\}}R\left[\gamma_{k},Ms_{k}\frac{1+s_{n}}{1-s_{{n-1}}s_{n}}\right]\right)\bigcup R\left[\gamma_{n},Ms_{n}\frac{1+s_{{n-1}}}{1-s_{{n-1}}s_{n}}\right]\] will be called the covering of \(G_f\) associated to the system \(\mathcal{S}\).
For \(m\in\mathbb{N}\), we will denote \[\mathcal{C}_{\mathcal{S}_m}=\mathcal{C}_m,\] where
\[\mathcal{S}_m=\left((\mathbb{R}^2,\rho), (f_{k_1}\circ\dots \circ f_{k_m})_{k_1,\dots,k_m\in\{1,\dots ,n\}}\right).\]

\begin{remark}
  According to Theorem 3.2 from \cite{locHBfrct}, by applying Theorem 3.1 for the IFSs \(\mathcal{S}_m\), \(m\in\mathbb{N}\), we obtain coverings of \(G_f\) with balls having the radii as small as we want.

  Let us note that, in view of Remark 3.2 from \cite{locHBfrct}, we have
  \[\lim_{m\to\infty} h(\mathcal{C}_m,G_f)=0.\]
\end{remark}

\begin{remark}
  In the above mentioned framework, for \(k_1, k_2, \dots k_m\in \{1,\dots,n\} \), with \(m\in \mathbb{N}\), \(m\geq 2\), we have
  \[(f_{k_1}\circ\dots \circ f_{k_m})(x,y)=(a_{k_1k_2\dots k_m}x+b_{k_1k_2\dots k_m}, c_{k_1k_2,\dots k_m}x+d_{k_1k_2,\dots k_m}y+e_{k_1k_2\dots k_m}),\]
  for all \(x,y\in\mathbb{R}\), where \(a_{k_1k_2\dots k_m}\), \(b_{k_1k_2\dots k_m}\), \(c_{k_1k_2\dots k_m}\) , \(d_{k_1k_2\dots k_m}\) and \(e_{k_1k_2\dots k_m}\) can be obtained recursively from the following formulas
  \[\left\{
    \begin{aligned}
      a_{k_1k_2\dots k_m} & = a_{k_1} a_{k_2 \dots k_m}                                     \\
      b_{k_1k_2\dots k_m} & = a_{k_1} b_{k_2 \dots k_m}+b_{k_1}                             \\
      c_{k_1k_2\dots k_m} & = c_{k_1}a_{k_2 \dots k_m}+d_{k_1} c_{k_2k_3 \dots k_m}         \\
      d_{k_1k_2\dots k_m} & = d_{k_1} d_{k_2 \dots k_m}                                     \\
      e_{k_1k_2\dots k_m} & = c_{k_1} b_{k_2 \dots k_m}+d_{k_1} e_{k_2 \dots k_m}+ e_{k_1},
    \end{aligned}\right.\] which represent a very good tool for improving the efficiency of the computations involved.

\end{remark}

\textbf{Obtaining an estimation for \(Imf\)}
\\

Based on Remark \ref{span} and Theorem \ref{incl}, we obtain the following result:
\begin{theorem}\label{infinclusion}
  In the previous framework, we have: \[Imf \subseteq [A,B],\] where
  \[\begin{aligned}
      A=\min & \left\{
      \min_{k\in\{1,\dots,n-1\}}\left(
      \frac{b_kc_k}{(1-a_k)(1-d_k)}+
      \frac{e_k}{1-d_k}-\frac{Ms_k(1+s_n)}{\theta(1-s_{n-1}s_n)}\right),
      \right.
      \\ &
      \left.\frac{b_nc_n}{(1-a_n)(1-d_n)}+\frac{e_n}{1-d_n}-\frac{Ms_n(1+s_{n-1})}{\theta(1-s_{n-1}s_n)} \right\}\end{aligned}\]
  and

  \[\begin{aligned}
      B=\max & \left\{
      \max_{k\in\{1,\dots,n-1\}}\left(
      \frac{b_kc_k}{(1-a_k)(1-d_k)}+
      \frac{e_k}{1-d_k}+\frac{Ms_k(1+s_n)}{\theta(1-s_{n-1}s_n)}\right),
      \right.
      \\ &
      \left.\frac{b_nc_n}{(1-a_n)(1-d_n)}+\frac{e_n}{1-d_n}+\frac{Ms_n(1+s_{n-1})}{\theta(1-s_{n-1}s_n)} \right\}.\end{aligned}\]
\end{theorem}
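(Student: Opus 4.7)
The plan is to combine the covering inclusion $G_f \subseteq \mathcal{C_S}$ furnished by Theorem \ref{incl} with the vertical-extent description of a $\rho$-ball given in Remark \ref{span}, and then collapse the resulting finite union of intervals by Lemma \ref{unionsup}.

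First, I would record that, by the very definition of the graph of a function, $\operatorname{Im} f$ is the projection of $G_f$ onto the second coordinate axis: a real number $v$ belongs to $\operatorname{Im} f$ if and only if there exists $u \in [a,b]$ with $(u,v) \in G_f$. Hence, from $G_f \subseteq \mathcal{C_S}$, we deduce
\[
\operatorname{Im} f \;\subseteq\; \bigl\{ v \in \mathbb{R} \;\big|\; \exists\, u \in \mathbb{R} \text{ with } (u,v) \in \mathcal{C_S} \bigr\}.
\]

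Next, I would apply Remark \ref{span} rhombus-by-rhombus. Write $\gamma_k = (u_k, v_k)$ with
\[
v_k \;=\; \frac{b_k c_k}{(1-a_k)(1-d_k)} + \frac{e_k}{1-d_k},
\]
and set $r_k = M s_k (1+s_n)/(1 - s_{n-1} s_n)$ for $k \in \{1,\dots,n-1\}$ and $r_n = M s_n (1+s_{n-1})/(1 - s_{n-1} s_n)$. By Remark \ref{span}, the vertical projection of $R[\gamma_k, r_k]$ is exactly the closed interval $[v_k - r_k/\theta,\; v_k + r_k/\theta]$. Consequently, the vertical projection of $\mathcal{C_S}$ is the finite union
\[
\bigcup_{k=1}^{n} \Bigl[ v_k - \tfrac{r_k}{\theta},\; v_k + \tfrac{r_k}{\theta}\Bigr].
\]

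Finally, I would invoke Lemma \ref{unionsup} (applied both to the sets themselves and, via reflection, to their infima) to conclude that the supremum of this union is $B$ and its infimum is $A$, so the union is contained in $[A,B]$. Chaining the inclusions yields $\operatorname{Im} f \subseteq [A,B]$. There is no real obstacle here: the argument is a direct reading of Theorem \ref{incl} through Remark \ref{span}; the only bookkeeping step is to verify that the numerical expressions in $A$ and $B$ match $v_k \mp r_k/\theta$ for the two cases $k < n$ and $k = n$, which follows by plain substitution.
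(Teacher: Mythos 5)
Your argument is correct and coincides with the paper's own justification, which is precisely ``Based on Remark \ref{span} and Theorem \ref{incl}'': project the rhombus covering onto the second coordinate, read off the vertical extent $[v_k - r_k/\theta,\, v_k + r_k/\theta]$ of each $R[\gamma_k,r_k]$ from Remark \ref{span}, and bound the resulting finite union of intervals by $[A,B]$ via Lemma \ref{unionsup}. The substitution check at the end matches the stated formulas for $A$ and $B$, so nothing is missing.
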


\begin{corollary} \textit{In the previous framework, we have}
  \[G_f\subseteq [a,b]\times[A,B].\]
\end{corollary}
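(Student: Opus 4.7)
The plan is to observe that the corollary follows by combining two independent facts about coordinates of points on the graph. Namely, for any point $(x,y) \in G_f$, we have $y = f(x)$ for some $x \in [a,b]$ (by the definition of the fractal interpolation function as a continuous function $f:[a,b]\to\mathbb{R}$ whose graph coincides with $\att$). So the first coordinate lies in $[a,b]$ automatically, and the only real content is to verify that the second coordinate lies in $[A,B]$.

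First, I would note that the projection of $G_f$ onto the first axis is precisely $[a,b]$, since $G_f = \{(x,f(x)) : x \in [a,b]\}$ by the framework established prior to Theorem \ref{incl}. This handles the $x$-coordinate containment with no work.

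Next, for the $y$-coordinate, I would invoke Theorem \ref{infinclusion} directly: that theorem gives $\mathrm{Im}\,f \subseteq [A,B]$, so for any $(x,y) \in G_f$ we have $y = f(x) \in \mathrm{Im}\,f \subseteq [A,B]$. Putting the two coordinate bounds together yields $(x,y) \in [a,b] \times [A,B]$, and since $(x,y) \in G_f$ was arbitrary, we conclude $G_f \subseteq [a,b]\times[A,B]$.

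There is essentially no obstacle here; the corollary is a packaging statement that repackages Theorem \ref{infinclusion} together with the trivial domain constraint into a single two-dimensional rectangle containing the graph. The only thing worth being careful about is to cite Theorem \ref{infinclusion} rather than try to re-derive the bound from Theorem \ref{incl}, since the passage from the rhombus covering to the vertical extremes has already been carried out in the proof of Theorem \ref{infinclusion} via Remark \ref{span}.
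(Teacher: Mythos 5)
Your argument is correct and is exactly the intended one: the paper states this corollary without proof as an immediate consequence of Theorem \ref{infinclusion}, since $G_f=\{(x,f(x)):x\in[a,b]\}$ forces the first coordinate into $[a,b]$ and the second into $\mathrm{Im}\,f\subseteq[A,B]$. Nothing further is needed.
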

\begin{remark}
  Note that, \(A\) and \(B\) can be algebraically expressed in terms of the data set \((x_0,y_0)\), \;\((x_1,y_1),\dots\), \((x_n,y_n)\) and the scaling factors \(d_k\).
\end{remark}
\section{Examples}
We are going to apply our results for the following frameworks (which are considered in \cite{manous2}, \cite{manous} and respectively \cite{Gdaw}):\\
\\

\textbf{Framework 1}

\begin{itemize} \item the data set is \begin{center} \(\{(0,3),(1,2),(2,4),(3,3),(4,4)\}\)\end{center}
  \item \;
        \begin{center}
          \(d_k=\frac{3}{10},\)
        \end{center}
\end{itemize}
for each \(k\in\{1,2,3,4\}\).

The pictures \(F_{11},F_{12},F_{13},F_{14}\) and \(F_{15}\) contain the graphical representations of \(\mathcal{C}_1\), \(\mathcal{C}_2\), \(\mathcal{C}_3\), \(\mathcal{C}_4\) and respectively \(\mathcal{C}_5\).
\captionsetup[subfigure]{labelformat=empty}
\begin{figure}[H]
  \centering
  \begin{tabular}{cc}
    \subfloat[\(F_{11}:\) Visualization of \(\mathcal{C}_1\)]{\includegraphics[width = 0.5\textwidth]{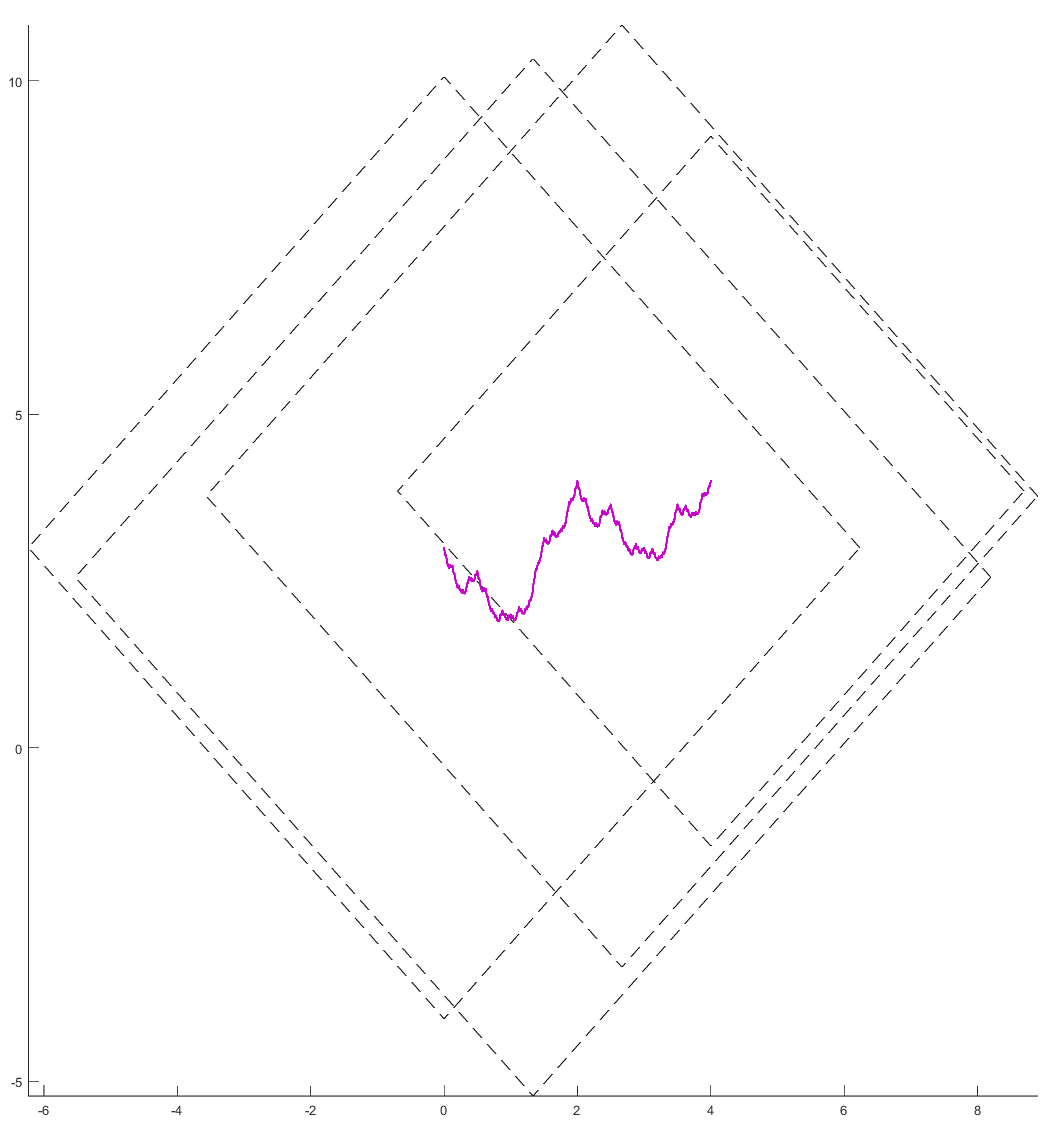}} &
    \subfloat[\(F_{12}:\) Visualization of \(\mathcal{C}_2\)]{\includegraphics[width = 0.5\textwidth]{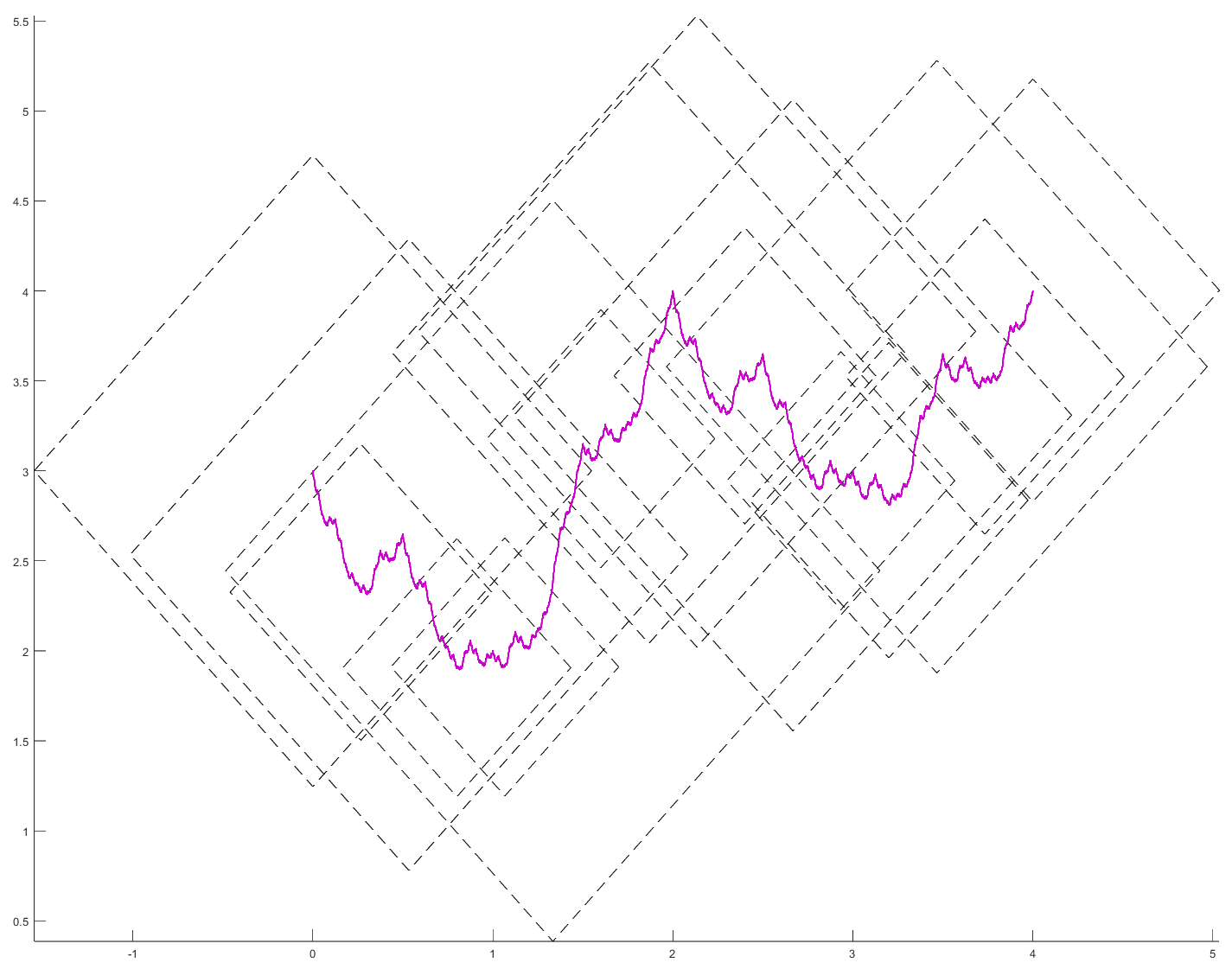}}
  \end{tabular}
\end{figure}

\begin{figure}[H]
  \centering
  \begin{tabular}{cc}
    \subfloat[\(F_{13}:\) Visualization of \(\mathcal{C}_3\)]{\includegraphics[width = 0.5\textwidth]{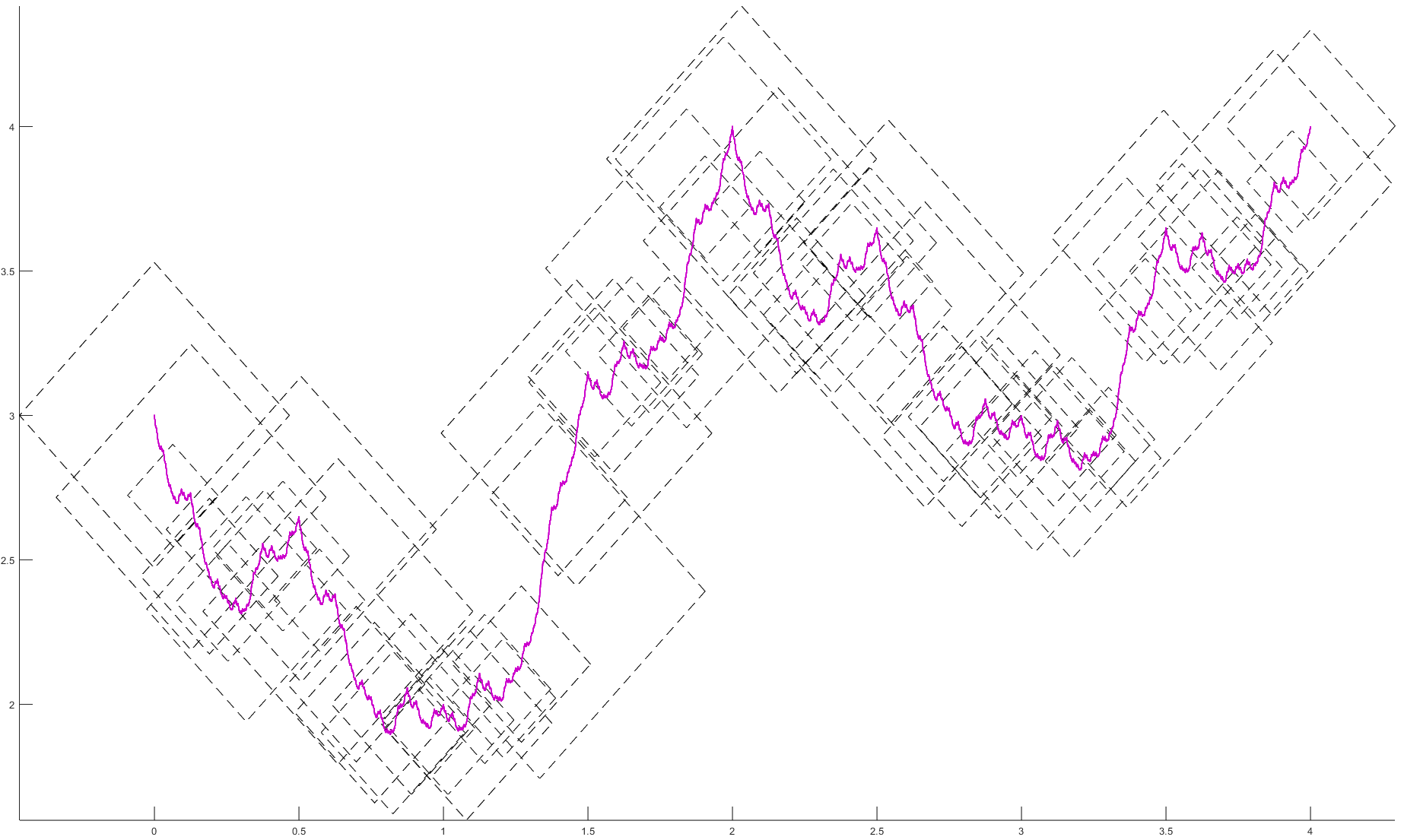}} &
    \subfloat[\(F_{14}:\) Visualization of \(\mathcal{C}_4\)]{\includegraphics[width = 0.5\textwidth]{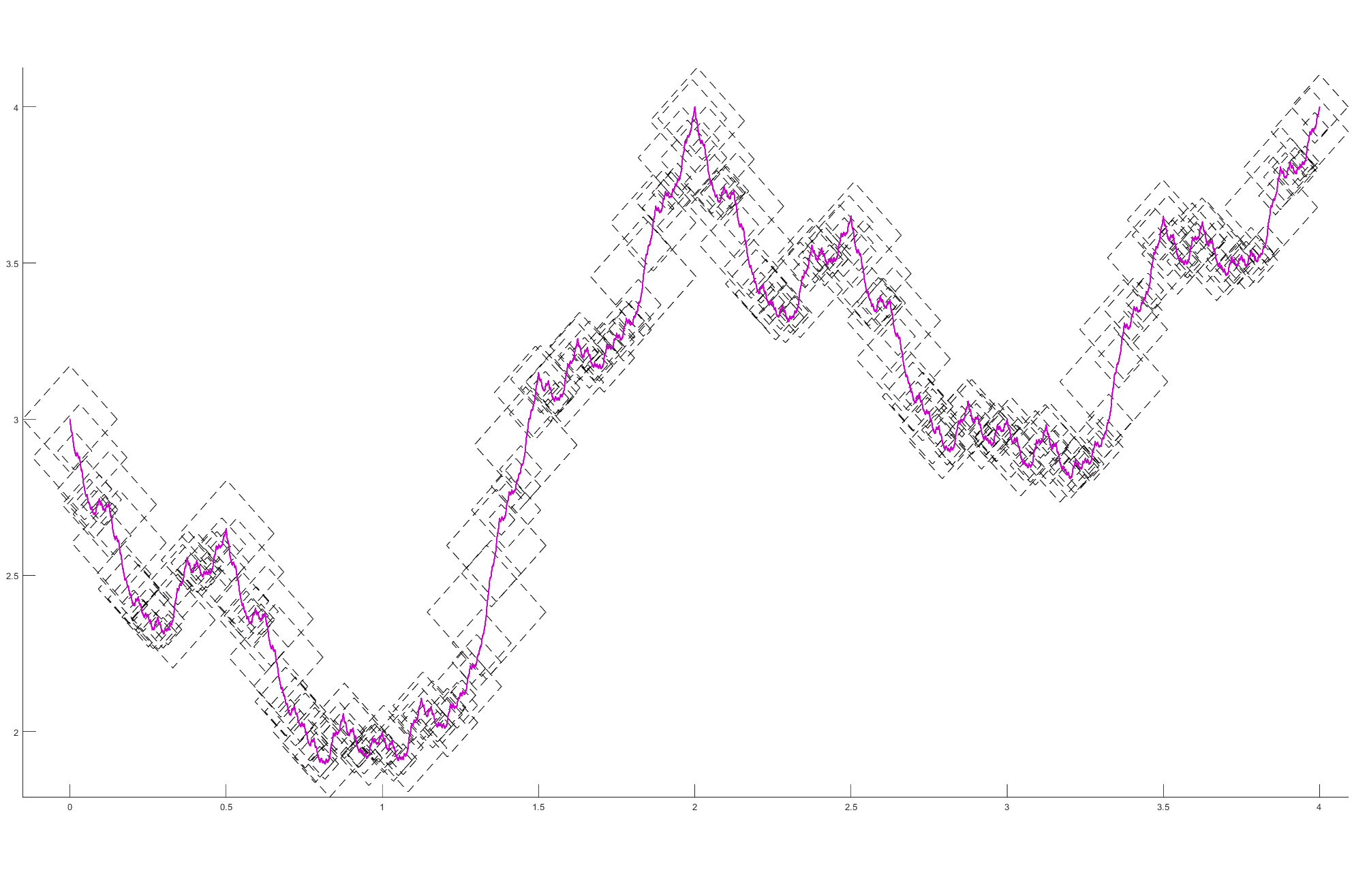}}
  \end{tabular}
\end{figure}

\begin{figure}[H]
  \centering
  \begin{tabular}{cc}
    \subfloat[\(F_{15}:\) Visualization of \(\mathcal{C}_5\)]{\includegraphics[width = 0.5\textwidth]{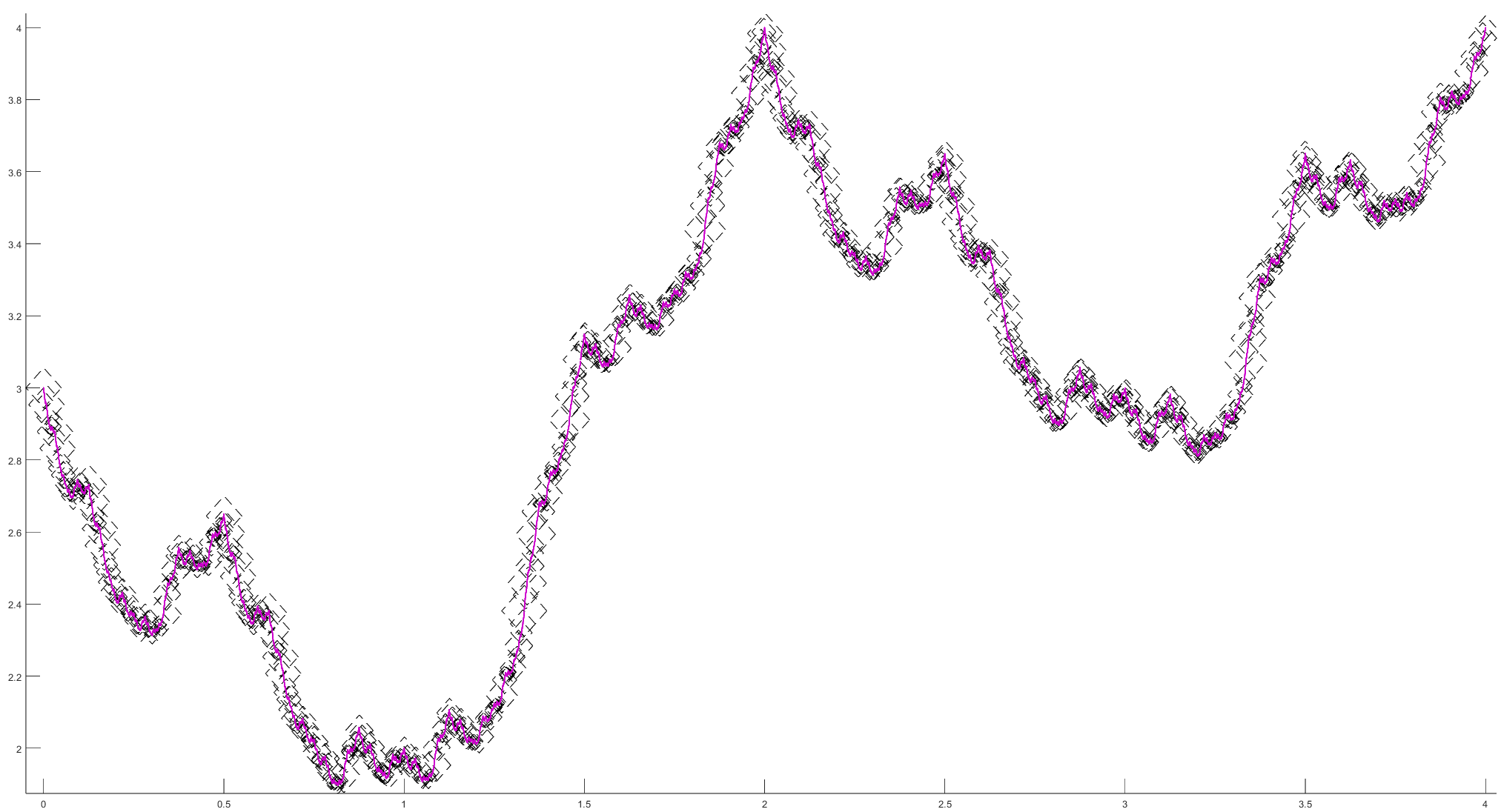}} &
  \end{tabular}
\end{figure}

Applying Theorem \ref{infinclusion} for \(\mathcal{S}_i\), \(i\in\{1,\dots, 5\}\), we get
\[Imf\subseteq [A_i,B_i],\] where:
\begin{figure}[H]
  \centering
  \begin{tabular}{c|c|c}
            & \(A_i\) & \(B_i\) \\
    \hline
    \(i=1\) & -5.2218 & 10.8386  \\
    \hline
    \(i=2\) & 0.3859 & 5.5299  \\
    \hline
    \(i=3\) & 1.5986  & 4.4169  \\
    \hline
    \(i=4\) & 1.7899  & 4.1260  \\
    \hline
    \(i=5\) & 1.8744  & 4.0393
  \end{tabular}.
\end{figure}

\textbf{Framework 2}\begin{itemize}\item the data set is
        \begin{center}
          \(\{(0,4),(1,2),(2,1),(3,5),(4,7),(5,4),(6,5),(7,2),(8,4),(9,5)\}\)
        \end{center}
  \item \;
        \begin{center}
          \(d_k=\frac{1}{4},\)
        \end{center}
\end{itemize}
for each \(k\in\{1,\dots,9\}\).

The pictures \(F_{21},F_{22},F_{23},F_{24}\) and \(F_{25}\) contain the graphical representations of \(\mathcal{C}_1\), \(\mathcal{C}_2\), \(\mathcal{C}_3\), \(\mathcal{C}_4\) and respectively \(\mathcal{C}_5\).
\captionsetup[subfigure]{labelformat=empty}
\begin{figure}[H]
  \centering
  \begin{tabular}{cc}
    \subfloat[\(F_{21}:\) Visualization of \(\mathcal{C}_1\)]{\includegraphics[width = 0.5\textwidth]{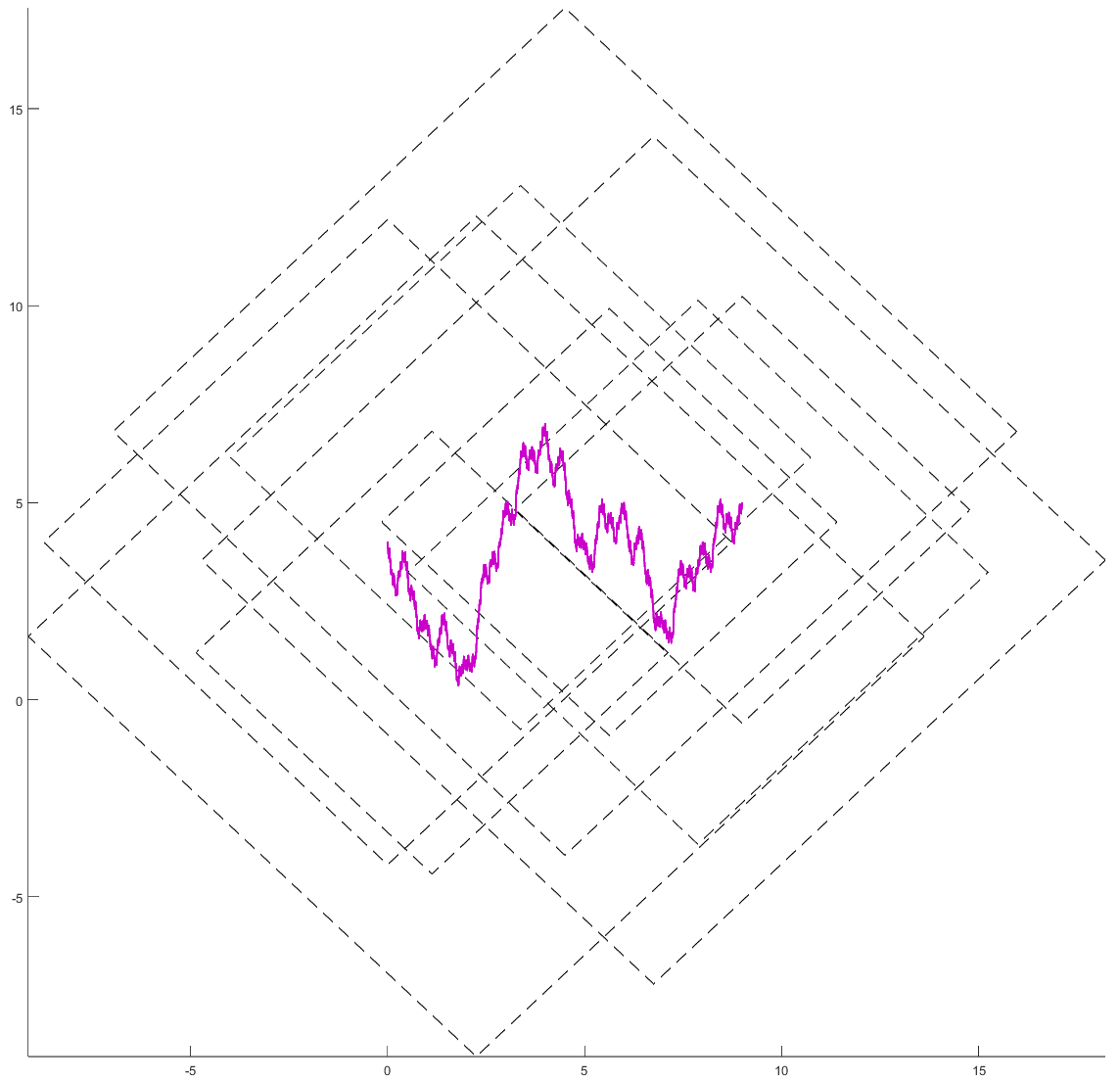}} &
    \subfloat[\(F_{22}:\) Visualization of \(\mathcal{C}_2\)]{\includegraphics[width = 0.5\textwidth]{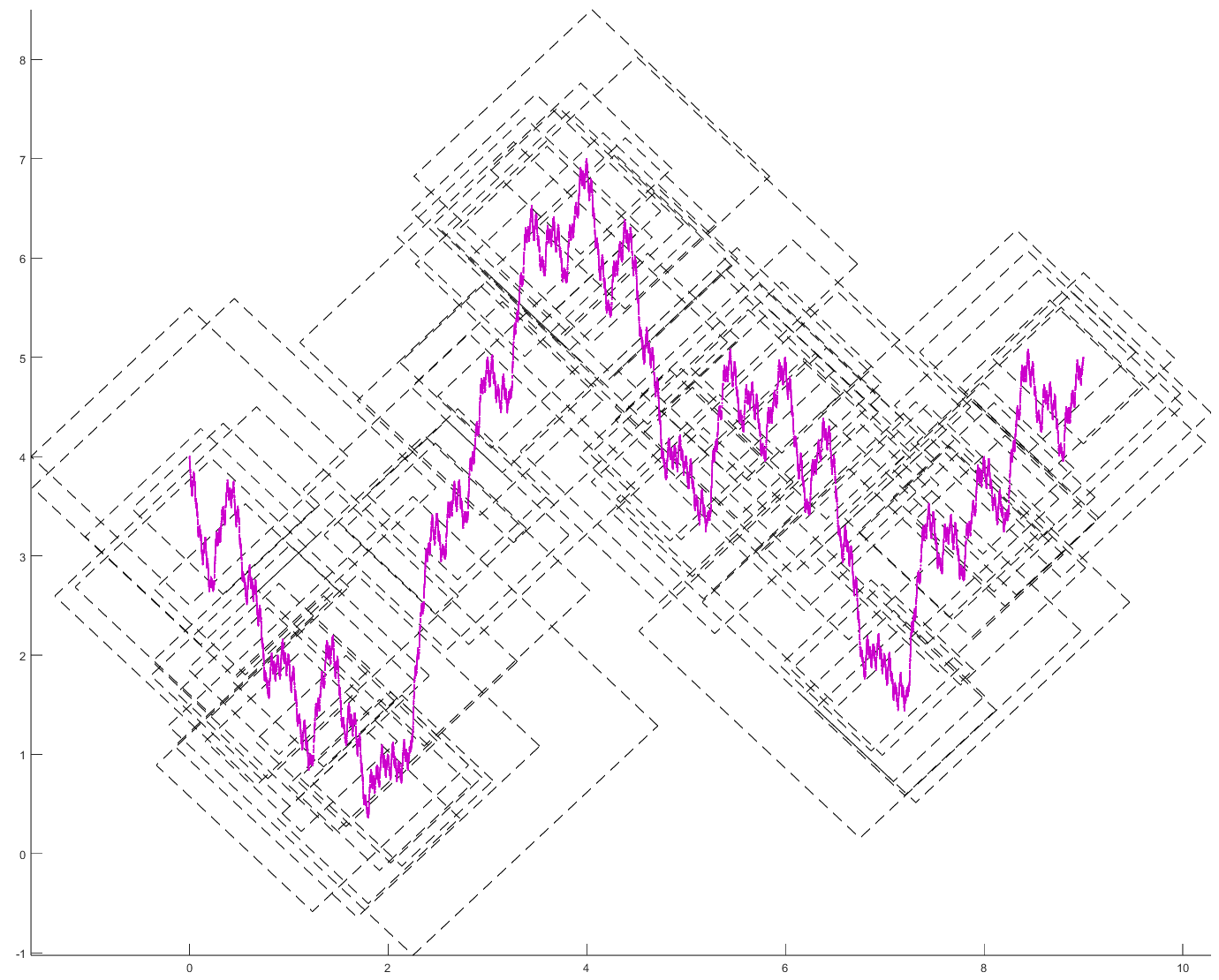}}
  \end{tabular}
\end{figure}

\begin{figure}[H]
  \centering
  \begin{tabular}{cc}
    \subfloat[\(F_{23}:\) Visualization of \(\mathcal{C}_3\)]{\includegraphics[width = 0.5\textwidth]{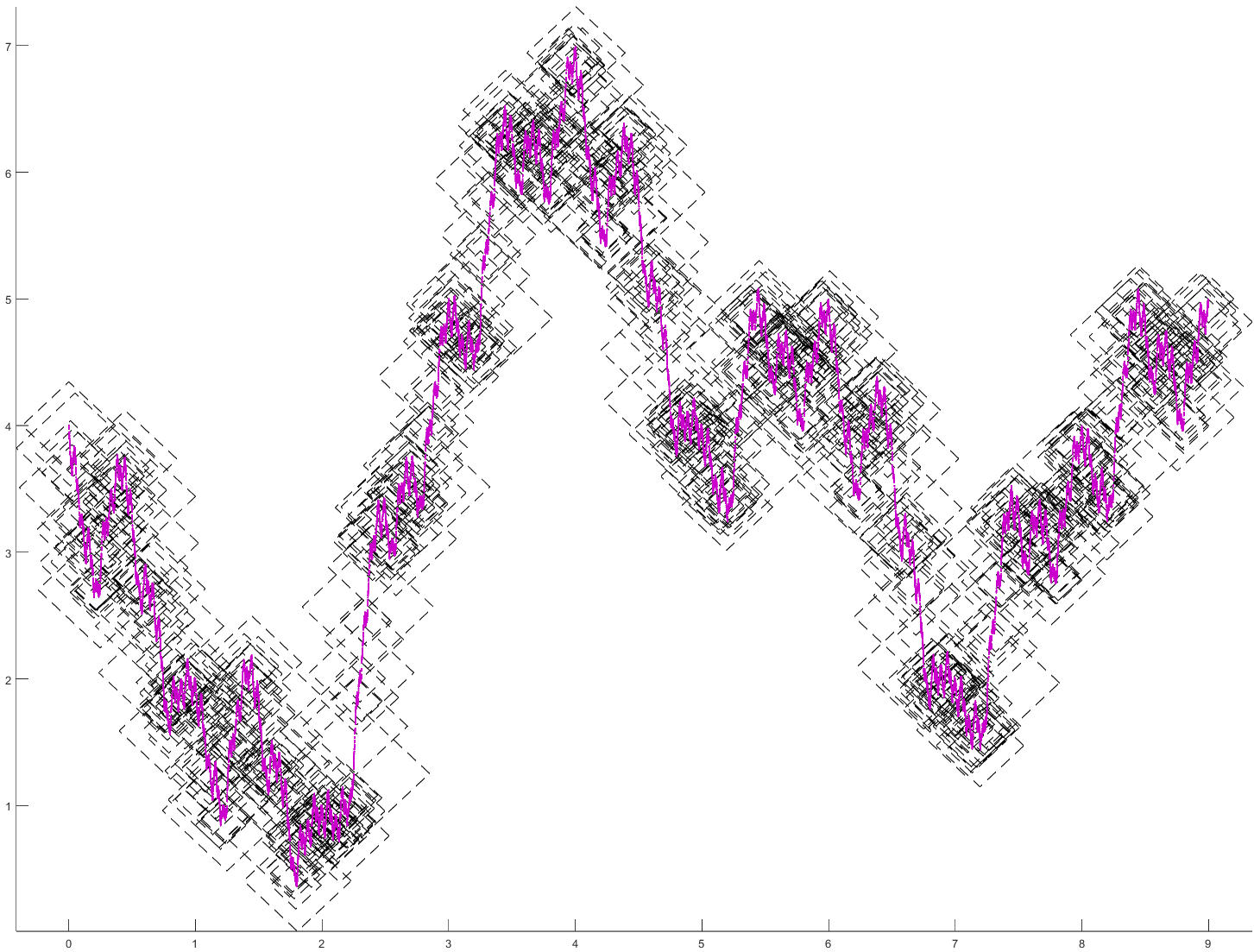}} &
    \subfloat[\(F_{24}:\) Visualization of \(\mathcal{C}_4\)]{\includegraphics[width = 0.5\textwidth]{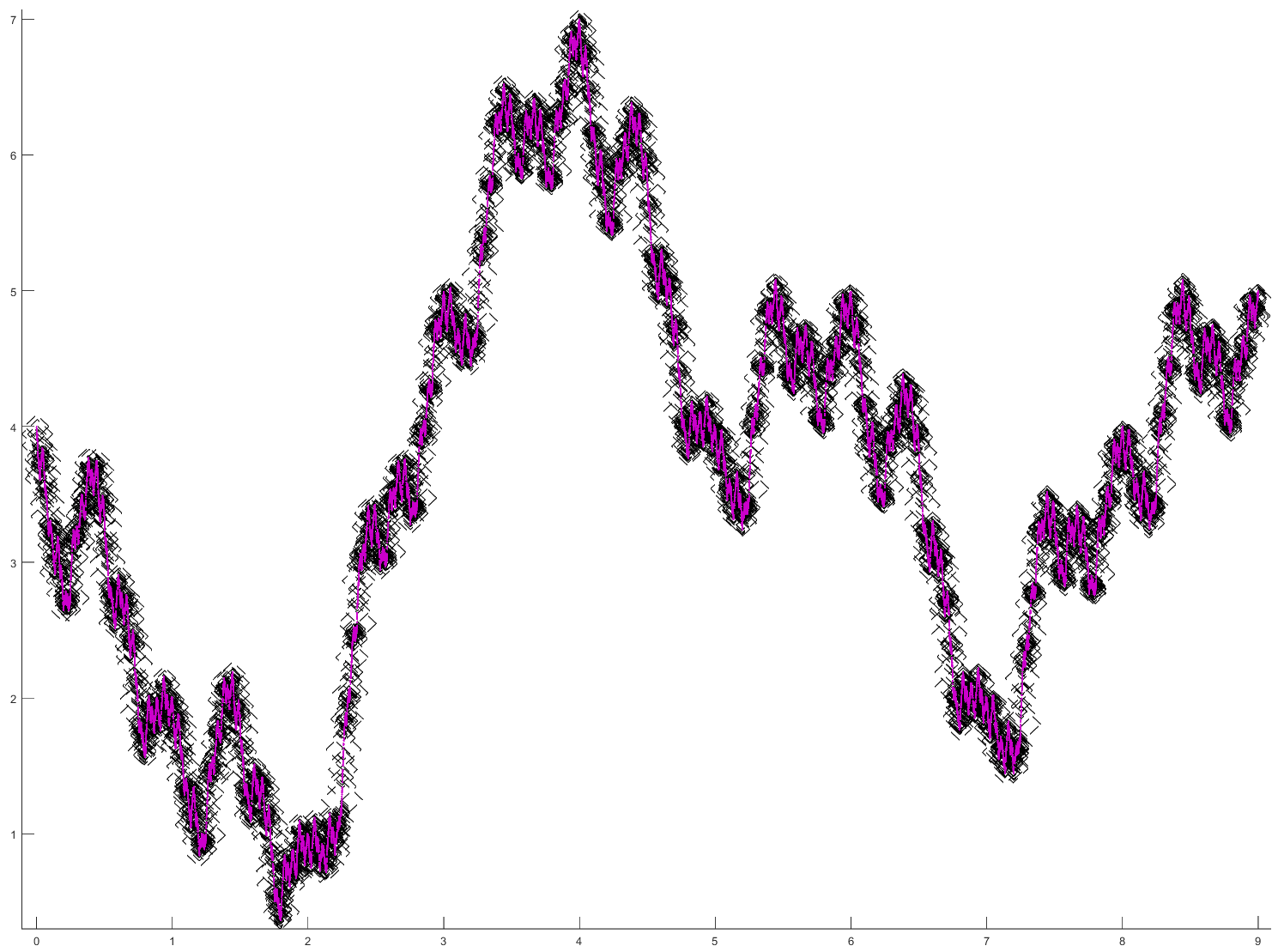}}
  \end{tabular}
\end{figure}

\begin{figure}[H]
  \centering
  \begin{tabular}{cc}
    \subfloat[\(F_{25}:\) Visualization of \(\mathcal{C}_5\)]{\includegraphics[width = 0.5\textwidth]{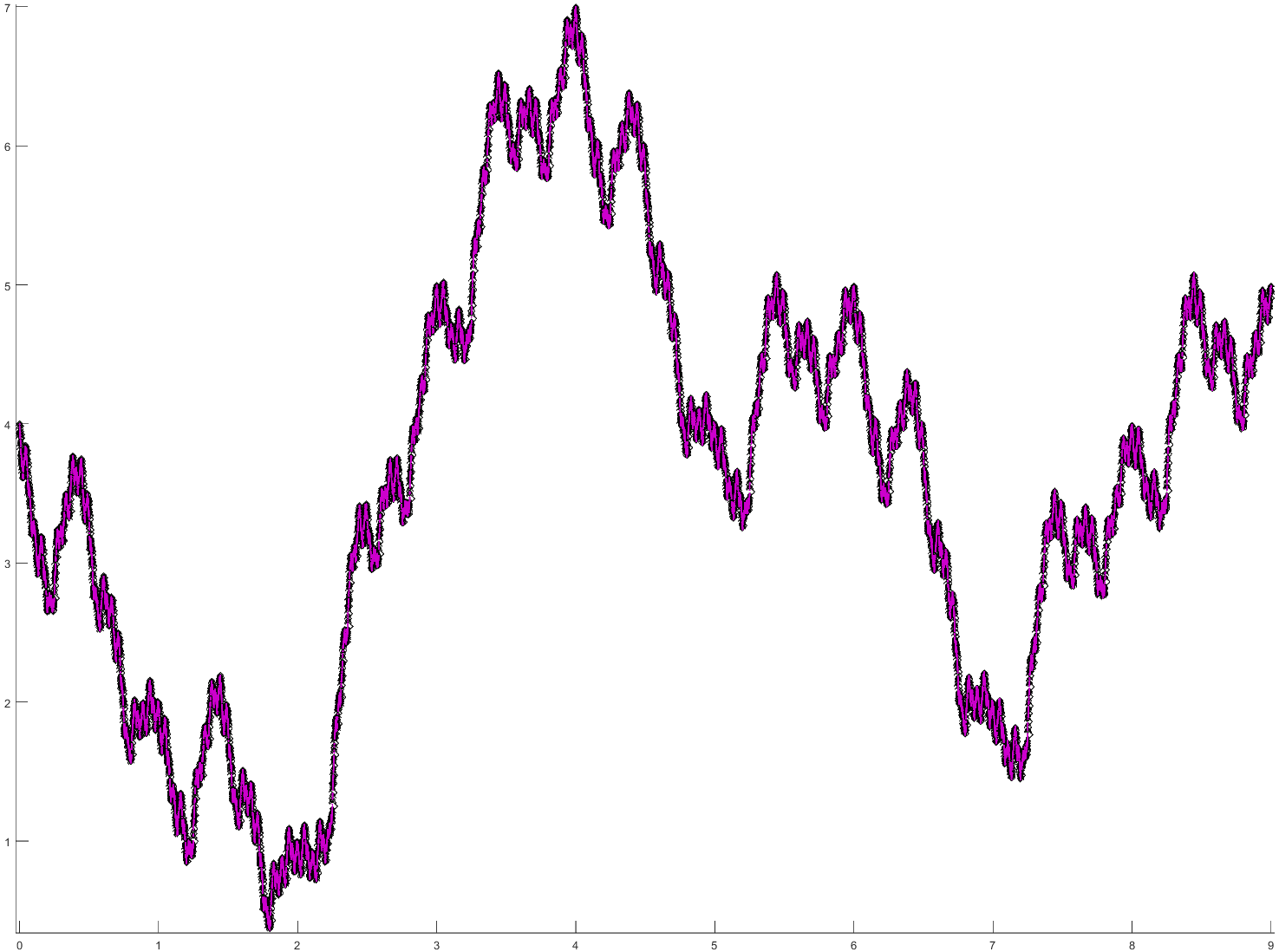}} &
  \end{tabular}
\end{figure}

Applying Theorem \ref{infinclusion} for \(\mathcal{S}_i\), \(i\in\{1,\dots, 5\}\), we get
\[Imf\subseteq [A_i,B_i],\] where
\begin{figure}[H]
  \centering
  \begin{tabular}{c|c|c}
            & \(A_i\) & \(B_i\) \\
    \hline
    \(i=1\) & -9.0538 & 17.5588
    \\
    \hline
    \(i=2\) & -1.0068 & 8.4890
    \\
    \hline
    \(i=3\) & 0.0069  & 7.3055
    \\
    \hline
    \(i=4\) & 0.2998  & 7.0716
    \\
    \hline
    \(i=5\) & 0.3393  & 7.0176
  \end{tabular}.
\end{figure}
\textbf{Framework 3}\begin{itemize}\item the data set is
        \begin{center}
          \(\{(0,0),(30,50),(60,40),(100,-10)\}\)
        \end{center}
  \item \;
        \begin{center}
          \(d_1=0.5,\;d_2=0.5\text{ and } d_3= 0.23.\)
        \end{center}
\end{itemize}
The pictures \(F_{31},F_{32},F_{33},F_{34}\) and \(F_{35}\) contain the graphical representations of \(\mathcal{C}_1\), \(\mathcal{C}_2\), \(\mathcal{C}_3\), \(\mathcal{C}_4\) and respectively \(\mathcal{C}_5\).
\captionsetup[subfigure]{labelformat=empty}
\begin{figure}[H]
  \centering
  \begin{tabular}{cc}
    \subfloat[\(F_{31}:\) Visualization of \(\mathcal{C}_1\)]{\includegraphics[width = 0.36\textwidth]{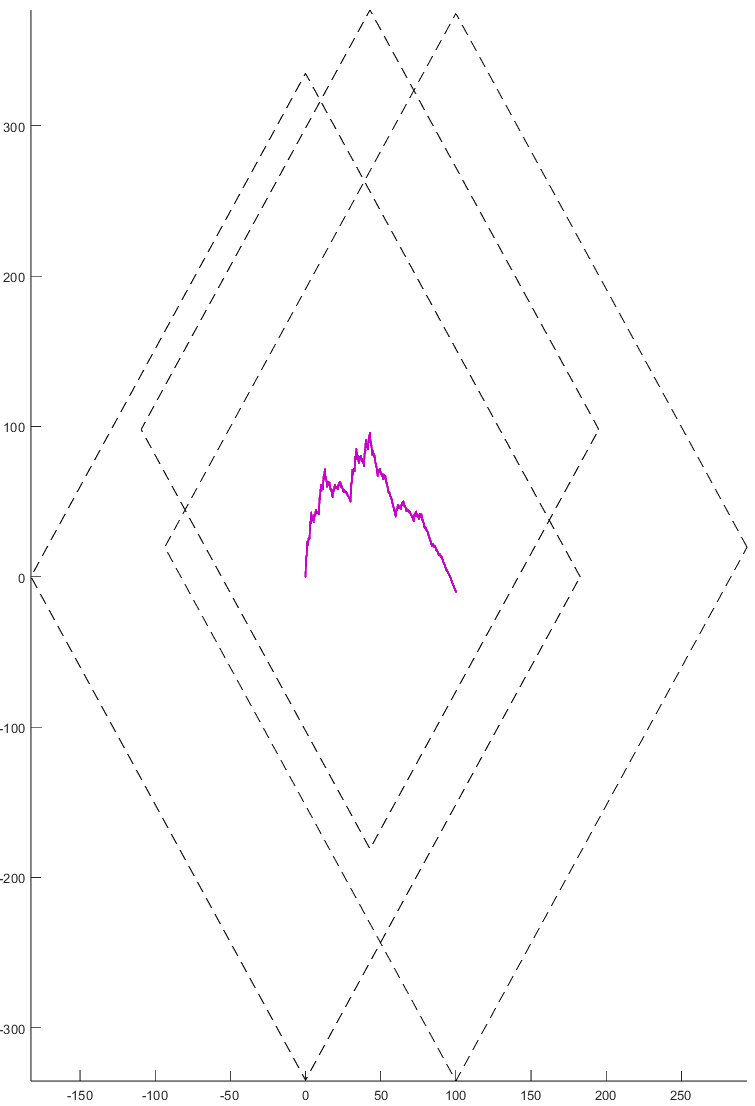}} &
    \subfloat[\(F_{32}:\) Visualization of \(\mathcal{C}_2\)]{\includegraphics[width = 0.36\textwidth]{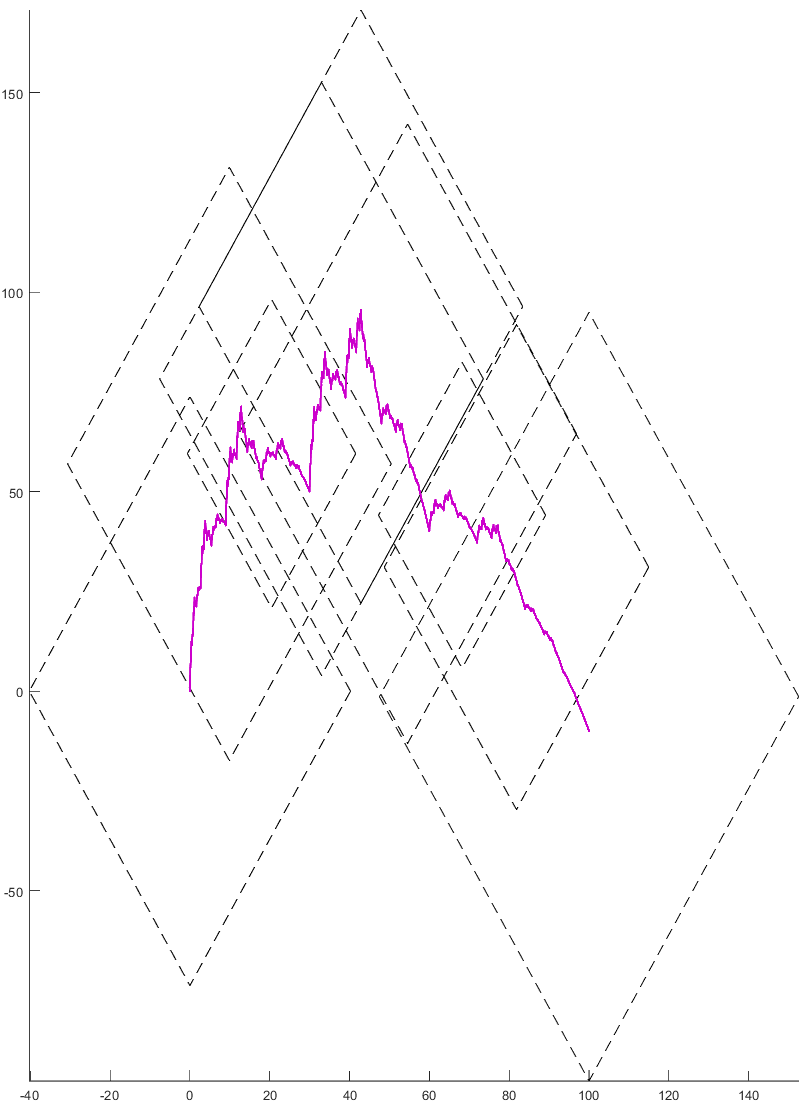}}
  \end{tabular}
\end{figure}

\begin{figure}[H]
  \centering
  \begin{tabular}{cc}
    \subfloat[\(F_{33}:\) Visualization of \(\mathcal{C}_3\)]{\includegraphics[width = 0.36\textwidth]{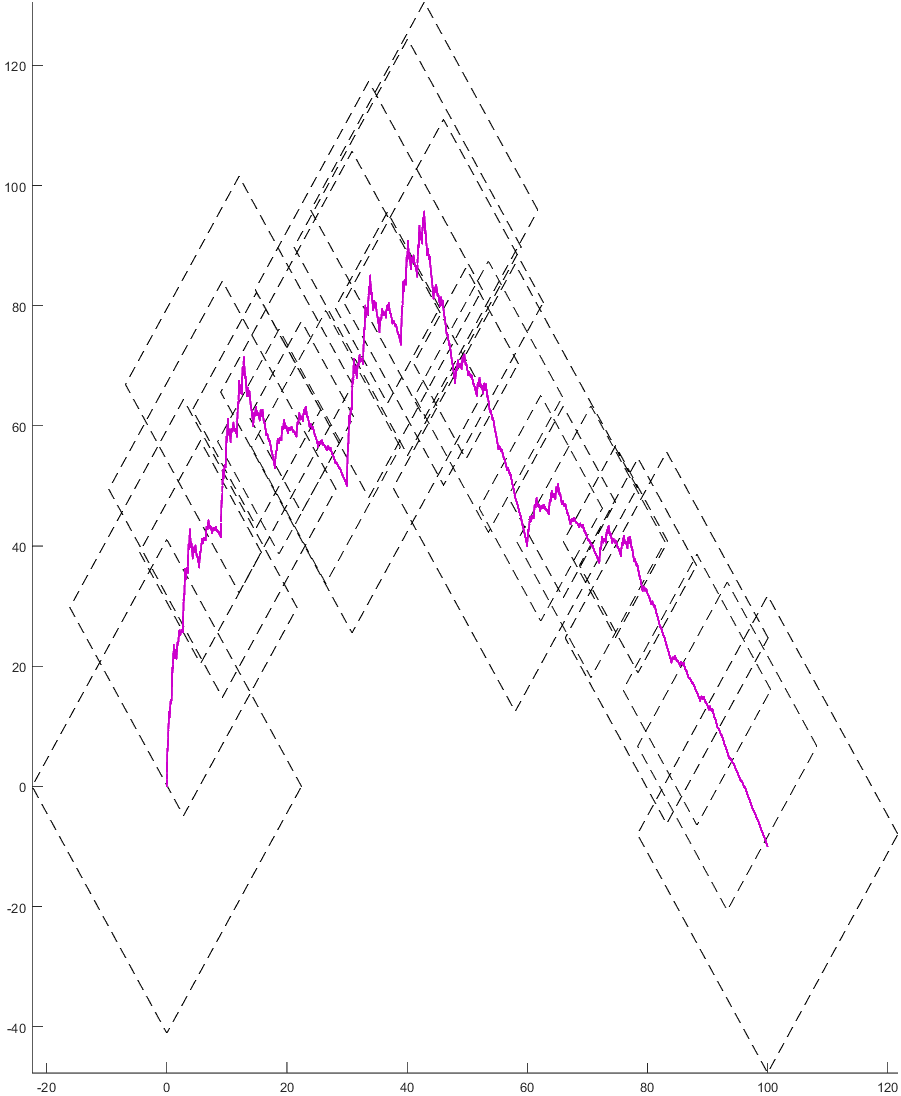}} &
    \subfloat[\(F_{34}:\) Visualization of \(\mathcal{C}_4\)]{\includegraphics[width = 0.36\textwidth]{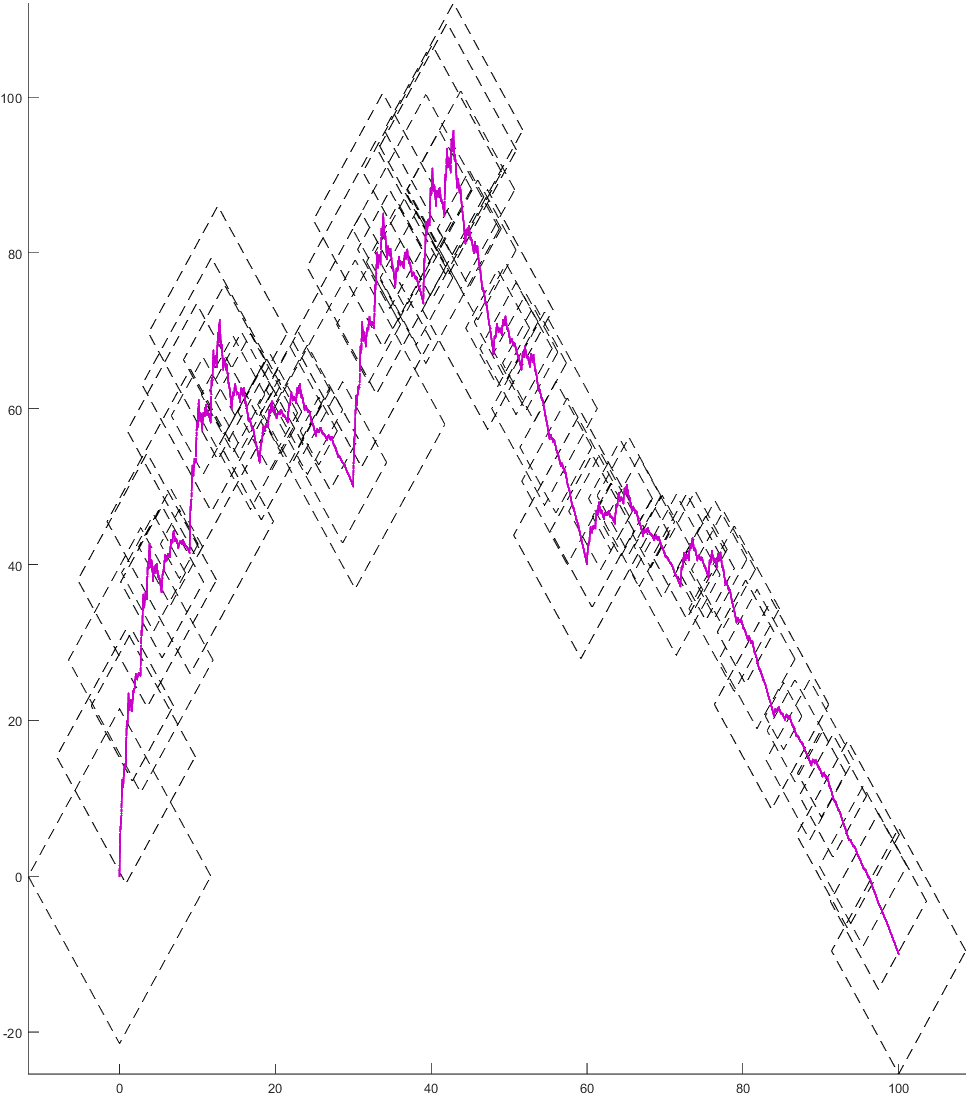}}
  \end{tabular}
\end{figure}

\begin{figure}[H]
  \centering
  \begin{tabular}{cc}
    \subfloat[\(F_{35}:\) Visualization of \(\mathcal{C}_5\)]{\includegraphics[width = 0.36\textwidth]{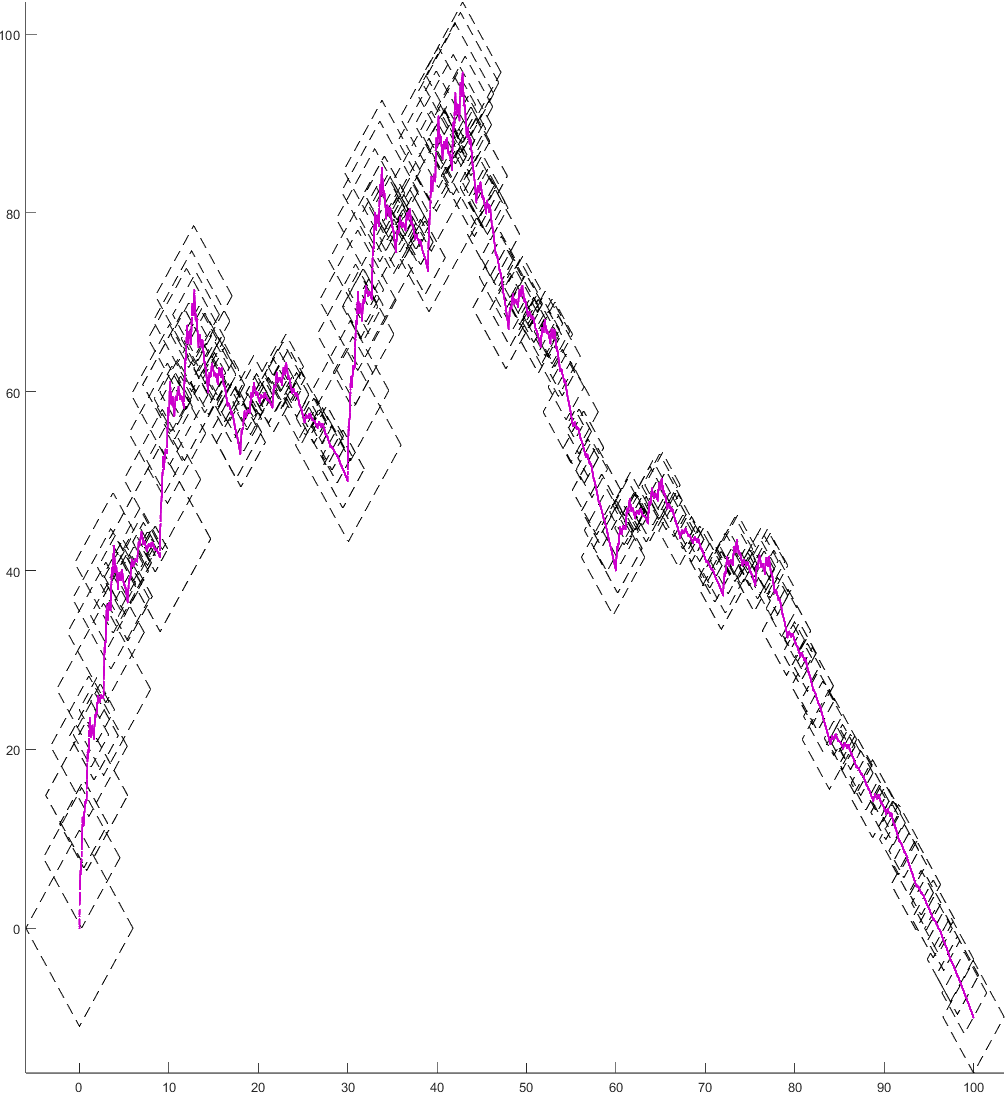}} &
  \end{tabular}
\end{figure}

Applying Theorem \ref{infinclusion} for \(\mathcal{S}_i\), \(i\in\{1,\dots, 5\}\), we get
\[Imf\subseteq [A_i,B_i],\] where
\begin{figure}[H]
  \centering
  \begin{tabular}{c|c|c}
            & \(A_i\)    & \(B_i\)  \\
    \hline
    \(i=1\) & -335.3988 &  376.9781
    \\
    \hline
    \(i=2\) & -97.6725  & 170.6729
    \\
    \hline
    \(i=3\) & -47.7096   & 130.5433
    \\
    \hline
    \(i=4\) & -25.3882  & 112.0746
    \\
    \hline
    \(i=5\) & -16.1975   & 103.5866
  \end{tabular}.
\end{figure}
\section*{Appendix}
\hyphenation{MATLAB}
In order to find the radiuses and vertices in the previous examples we used MATLAB R2021b and the following functions:

\begin{lstlisting}[style=Matlab-editor]
  %x and y are arrays of length n containing the data points.
  %d is an array consisting of the values d_k. 
  function [a,b,c,e,theta] = init_coeff(x,y,d)
  %This function is used to calculate the coefficients a_k,b_k,c_k,e_k and the constant theta.
  l=length(x);
  a=zeros(1,l-1);
  b=zeros(1,l-1);
  c=zeros(1,l-1);
  e=zeros(1,l-1);
  lip=zeros(1,l-1);
  fix =zeros(2,l-1);
  for i=1:l-1
      a(i)=(x(i+1)-x(i))/(x(l)-x(1));
      b(i)=(x(l)*x(i)-x(1)*x(i+1))/(x(l)-x(1));
      c(i)=(y(i+1)-y(i))/(x(l)-x(1))-d(i)*(y(l)-y(1))/(x(l)-x(1));
      e(i)=(x(l)*y(i)-x(1)*y(i+1))/(x(l)-x(1))-d(i)*(x(l)*y(1)-x(1)*y(l))/(x(l)-x(1));
  end

  if any(c)
      theta = (1-max(a))/(2*max(abs(c)));
  else
      theta=1;
  end

  end
\end{lstlisting}

\begin{lstlisting}[style=Matlab-editor]
  % m - order of composition
  % p - a list consisting of all m length arrays with numbers from the set {1,2, ... , n}.
  % k - an array of length m with numbers from the set {1,2, ... , n}.
  function [a_k,b_k,c_k,d_k,e_k] = comp_k(a,b,c,d,e,m,k)
    if m=1
      a_k=a(k);
      b_k=b(k);
      c_k=c(k);
      d_k=d(k);
      e_k=e(k);
    else
      [a_t, b_t, c_t, d_t, e_t] = comp_k(a,b,c,d,e,m-1,k(2:end));
      a_k = a(k(1))*a_t;
      b_k = a(k(1))*b_t;
      c_k = c(k(1))*a_t+d(k(1))*c_t;
      d_k = d(k(1))*d_t;
      e_t = c(k(1))*b_t+d(k(1))*e_t+e(k(1));
    end
  end

  function [lipschitz_constants, fixed_points] = comp_prop(a,b,c,d,e,m,p)
  %This function calculates the Lipschitz constants and fixed points of the system.
  l=length(p);
  lip=zeros(1,l);
  fix =zeros(2,l);
  for i = 1:p
    [a_m,b_m,c_m,d_m,e_m]=comp_k(a,b,c,d,e,m,p(i));
    lip(i)=max(d_m, a_m+theta*abs(c_m));
    fix(:,i)=[b_m/(1-a_m);b_m*c_m/((1-a_m)*(1-d_m))+e_m/(1-d_m)];
  end
  end
\end{lstlisting}
\begin{lstlisting}[style=Matlab-editor]
  % lcs - sorted array of Lipschitz constants calculated previously.
  % fps - array of the fixed points calculated previously, sorted in in the order given by the Lipschitz constants.
  function [r,v] = radiuses(lcs,fps)
  % this function calculates the radiuses and vertices for each function in the system.
  l=length(lcs);
  r=zeros(1,l);
  v=cell(1,l);
  maxd=0;
  for i=1:l-1
      for j = i+1:l
          n=norm(fps(:,i)-fps(:,j));
          if n>maxd
              maxd=n;
          end
      end
  end
  for i=1:l-1 
    r(i)=lcs(i)*maxd*(1+lcs(i))/(1-lcs(end)*lcs(end-1));
    v{i}=[[fps(1,i)+r(i);fps(2,i)],
          [fps(1,i)-r(i);fps(2,i)],
          [fps(1,i);fps(2,i)+r(i)/theta],
          [fps(1,i);fps(2,i)-r(i)/theta]];
    end
    r(l)=lcs(l)*maxd*(1+lcs(end-1))/(1-lcs(end)*lcs(end-1));
    v{l}=[[fps(1,l)+r(l);fps(2,l)],
          [fps(1,l)-r(l);fps(2,l)],
          [fps(1,l);fps(2,l)+r(l)/theta],
          [fps(1,l);fps(2,l)-r(l)/theta]];
  end
end

\end{lstlisting}

\makeatletter
\def\verbatim@font{\normalfont}
\makeatother
\begin{verbatim}
Bogdan-Cristian Anghelina
Faculty of Mathematics and Computer Science
Transilvania University of Brașov
Iuliu Maniu Street, nr. 50, 500091, Brașov, Romania
E-mail: bogdan.anghelina@unitbv.ro

Radu Miculescu
Faculty of Mathematics and Computer Science
Transilvania University of Brașov
Iuliu Maniu Street, nr. 50, 500091, Brașov, Romania
E-mail: radu.miculescu@unitbv.ro

María Antonia Navascués
Departamento de Matemática Aplicada
Universidad de Zaragoza
50018 Zaragoza Spain 
E-mail: manavas@unizar.es
\end{verbatim}
\end{document}